\documentclass[psamsfonts,fceqn,leqno]{amsart}
\usepackage{mathrsfs,latexsym,amsfonts,amssymb,curves,epic}
\setcounter{page}{1} \setlength{\textwidth}{14.6cm}
\setlength{\textheight}{22.5cm} \setlength{\evensidemargin}{0.8cm}
\setlength{\oddsidemargin}{0.8cm} \setlength{\topmargin}{0.8cm}
\usepackage{color}

\newtheorem{theorem}{Theorem}[section]
\newtheorem{corollary}[theorem]{Corollary}
\newtheorem{proposition}[theorem]{Proposition}
\newtheorem{lemma}[theorem]{Lemma}
\newtheorem{question}[theorem]{Question}
\newtheorem{problem}[theorem]{Problem}
\theoremstyle{definition}

\newtheorem{remark}[theorem]{Remark}
\newtheorem{example}[theorem]{Example}

\begin{document}
\title[The characterizations of dense-pseudocompact and dense-connected spaces]
{The characterizations of Dense-pseudocompact and dense-connected spaces}

  \author{Fucai Lin}
  \address{(Fucai Lin): School of mathematics and statistics,
  Minnan Normal University, Zhangzhou 363000, P. R. China}
  \email{linfucai@mnnu.edu.cn; linfucai2008@aliyun.com}

  \author{Qiyun Wu}
  \address{(Qiyun Wu): School of mathematics and statistics,
  Minnan Normal University, Zhangzhou 363000, P. R. China}
\email{904993706@qq.com}

  \thanks{The first author is supported by the Key Program of the Natural Science Foundation of Fujian Province (No: 2020J02043), the NSFC (No. 11571158), the Institute of Meteorological Big Data-Digital Fujian and Fujian Key Laboratory of Data Science and Statistics.}

  \keywords{dense-pseudocompact; dense-connected; dense-subgroup-connected; dense-ultraconnected; dense subset}
  \subjclass[2000]{22A05, 54B05, 54C30, 54D05, 54H11}

  \begin{abstract}
Assume that $\mathcal{P}$ is a topological property of a space $X$, then we say that $X$ is {\it dense-$\mathcal{P}$} if each dense subset of $X$ has the property $\mathcal{P}$. In this paper, we mainly discuss dense subsets of a space $X$, and we prove that:

\smallskip
(1) if $X$ is Tychonoff space, then $X$ is dense-pseudocompact iff the range of each continuous real-valued function $f$ on $X$ is finite, iff $X$ is finite, iff $X$ is hereditarily pseudocompact;

\smallskip
(2) $X$ is dense-connected iff $\overline{U}=X$ for any non-empty open subset $U$ of $X$;

\smallskip
(3) $X$ is dense-ultraconnected iff for point $x\in X$, we have $\overline{\{x\}}=X$ or $\{x\}\cup (X\setminus\overline{\{x\}})$ is the unique open neighborhood of $x$ in $\{x\}\cup (X\setminus\overline{\{x\}})$, iff
for any two points $x$ and $y$ in $X$, we have $x\in \overline{\{y\}}$ or $y\in \overline{\{x\}}$.

Moreover, we give a characterization of a topological group (resp., paratopological group, quasi-topological group) $G$ such that $G$ is dense-connected.
  \end{abstract}
 \maketitle
\section{Introduction and terminology}
The dense subsets play an important role in the study of the theory of topological spaces, see \cite{AT2008, E1989}. In 1975, R. Levy and R.H. McDowell \cite{LM1975} introduced the concept of dense-separable, that is, each dense subset of a space is separable; then, in 1976, J.H. Weston and J. Shilleto \cite{WS1976} studied the cardinality of dense subsets of topological spaces. In 1989, I. Juh\'{a}sz and S. Shelah \cite{JS1989} proved that the $\pi$-weight of a compact Hausdorff space $X$ is equal to the supremum of the density of all dense subsets of $X$. Recently, A. Dow and I. Juh\'{a}sz \cite{DJ2020} has proved that each dense subset of a compact space can be covered by countably many compact subsets iff it has a countable $\pi$-weight; in \cite{LWL2023}, the authors systematically discuss the dense-separable spaces. By the definition of dense-separable, we can define the following concepts.

Let $X$ be a space and $\mathcal{P}$ be a topological property of $X$. We say that $X$ is {\it dense-$\mathcal{P}$} if each dense subset of $X$ has the property $\mathcal{P}$; $X$ is said to be {\it one-dense-$\mathcal{P}$} if there exists a dense subset $D$ of $X$ such that $D$ has the property $\mathcal{P}$; $X$ is said to be {\it proper one-dense-$\mathcal{P}$} if there exists a proper dense subset $D$ of $X$ such that $D$ has the property $\mathcal{P}$. Clearly, each proper one-dense-$\mathcal{P}$ is one-dense-$\mathcal{P}$, but not vice verse. We say that $X$ is {\it locally dense-$\mathcal{P}$} if for each point $x$ of $X$ and any neighborhood $U$ of $x$, there exists a neighborhood $V$ of $x$ such that $V\subset U$ and $V$ is dense-$\mathcal{P}$; $X$ is said to be {\it locally one-dense-$\mathcal{P}$} if for each point $x$ of $X$ and any neighborhood $U$ of $x$, there exists a neighborhood $V$ of $x$ such that $V\subset U$ and $V$ is one-dense-$\mathcal{P}$; $X$ is said to be {\it locally proper one-dense-$\mathcal{P}$} if for each point $x$ of $X$ and any neighborhood $U$ of $x$, there exists a neighborhood $V$ of $x$ such that $V\subset U$ and $V$ is proper one-dense-$\mathcal{P}$.

Let $G$ be a semitopological group and $\mathcal{P}$ be a topological property of $G$. We say that $G$ is {\it dense-subgroup-$\mathcal{P}$} if each dense subgroup of $G$ has the property $\mathcal{P}$; $G$ is said to be {\it one-dense-subgroup-$\mathcal{P}$} if there exists a dense subgroup $H$ of $G$ such that $H$ has the property $\mathcal{P}$; $G$ is said to be {\it proper one-dense-subgroup-$\mathcal{P}$} if there exists a proper dense subgroup $H$ of $G$ such that $H$ has the property $\mathcal{P}$. Clearly, each dense-subgroup-$\mathcal{P}$ with a non-trivial dense subgroup is proper one-dense-subgroup-$\mathcal{P}$, and each proper one-dense-subgroup-$\mathcal{P}$ is one-dense-subgroup-$\mathcal{P}$, but not vice verse.

It is well known that the pseudocompactness and the connectedness are two very important topological concepts, which have been applied to many fileds in mathematics. Hence it is meaningful to discuss the dense-pseudocompact and dense-connected spaces.

A {\it semitopological group} $G$ is a group $G$ with a topology such
that the product map of $G\times G$ into $G$ is separately
continuous. A {\it quasitopological group} $G$ is a group $G$ with a topology such
that $G$ is a semitopological group and the inverse map of $G$ onto itself
associating $x^{-1}$ with arbitrary $x\in G$ is continuous. A {\it
paratopological group} $G$ is a group $G$ with a topology such that
the product maps of $G \times G$ into $G$ is jointly continuous.
A {\it topological group} $G$ is a group $G$ with a
(Hausdorff) topology such that $G$ is a paratopological group and the inverse map of $G$ onto itself
associating $x^{-1}$ with arbitrary $x\in G$ is continuous.

This paper is organized as follows. In Section 2, we mainly discuss some properties of dense-$\mathcal{P}$-property.

Section 3 is dedicated to the study of dense-pseudocompact spaces. We prove that for a Tychonoff space $X$, it is dense-pseudocompact iff the range of each continuous real-valued function $f$ on $X$ is finite, iff $X$ is finite, iff $X$ is hereditarily pseudocompact.

In Section 4, we discuss some properties of dense-connected space. We prove that $X$ is dense-connected iff $\overline{U}=X$ for any non-empty open subset $U$ of $X$. Moreover, we prove that a topological group $G$ is dense-connected if and only if $G$ is a indiscrete topological group; a paratopological group $G$ is dense-connected if and only if $UV^{-1}=G$ for any non-empty open neighborhoods $U$ and $V$ of $e$; a quasitopological group $G$ is dense-connected if and only if $UV=G$ for any non-empty open neighborhoods $U$ and $V$ of $e$.

In Section 5, we mainly prove that $X$ is dense-ultraconnected iff for any point $x\in X$, we have $\overline{\{x\}}=X$ or $\{x\}\cup (X\setminus\overline{\{x\}})$ is the unique open neighborhood of $x$ in $\{x\}\cup (X\setminus\overline{\{x\}})$, iff
for any two points $x$ and $y$ in $X$, we have $x\in \overline{\{y\}}$ or $y\in \overline{\{x\}}$.

Denote the sets of real number, positive integers, the closed unit interval and all non-negative integers by $\mathbb{R}$,  $\mathbb{N}$, $I$ and $\omega$, respectively. Let $\mathbb{T}$ be unit circle with the usual topology. For undefined notation and terminology, the reader may refer to \cite{AT2008} and
  \cite{E1989}.

 \maketitle
\section{some properties of dense-$\mathcal{P}$-property}
In this section, we mainly discuss some topological properties of a space which is dense-$\mathcal{P}$. In particular, we give some topological properties $\mathcal{P}$ such that if $\mathcal{P}$ is dense-$\mathcal{P}$ then $\mathcal{P}$ is a hereditarily $\mathcal{P}$-property. Clearly, hereditarily $\mathcal{P}$-property is dense-$\mathcal{P}$, but not vice versa. Indeed, we have the following obvious theorem.

\begin{theorem}\label{t44}
Let $\mathcal{P}$ be a topological property which is closed heredity. If a space $X$ is dense-$\mathcal{P}$, then $X$ is hereditary $\mathcal{P}$-property.
\end{theorem}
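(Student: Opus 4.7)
The plan is to show that every subset $A \subseteq X$ has property $\mathcal{P}$ by embedding $A$ as a closed subspace of a suitably constructed dense subset of $X$, and then invoking the hypothesis together with closed-heredity.

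First I would fix an arbitrary subset $A \subseteq X$ and define
\[
D := A \cup (X \setminus \overline{A}).
\]
I expect the verification that $D$ is dense in $X$ to be immediate: since $A \subseteq D$, one has $\overline{A} \subseteq \overline{D}$; on the other hand $X \setminus \overline{A} \subseteq D \subseteq \overline{D}$, so
\[
\overline{D} \supseteq \overline{A} \cup (X \setminus \overline{A}) = X.
\]

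Next I would argue that $A$ is closed in the subspace $D$. The key observation is that $X \setminus \overline{A}$ is open in $X$, hence $D \cap (X \setminus \overline{A}) = X \setminus \overline{A}$ is open in $D$. Since $A = D \setminus (X \setminus \overline{A})$ by the very definition of $D$ (the two pieces being disjoint), it follows that $A$ is closed in $D$.

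Now the hypothesis that $X$ is dense-$\mathcal{P}$ gives that $D$ has property $\mathcal{P}$, and since $\mathcal{P}$ is closed-hereditary and $A$ is closed in $D$, the subspace $A$ inherits $\mathcal{P}$. As $A$ was an arbitrary subset of $X$, the space $X$ is hereditarily $\mathcal{P}$. There is no real obstacle here; the only subtle point is the decomposition $A \cup (X \setminus \overline{A})$, chosen precisely so that the ``padding'' $X \setminus \overline{A}$ is open in $X$ (making $A$ closed in $D$) while being large enough to make $D$ dense.
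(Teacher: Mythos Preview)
Your proof is correct and follows exactly the same approach as the paper: both take an arbitrary subset $A$ (the paper calls it $Y$), form the dense set $D = A \cup (X \setminus \overline{A})$, and observe that $A$ is closed in $D$ so that closed-heredity of $\mathcal{P}$ applies. Your write-up simply spells out the density and closedness verifications that the paper leaves implicit.
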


\begin{proof}
Take any subspace $Y$ of $X$. Then $D=Y\cup (X\setminus \overline{Y})$ is dense in $X$, hence $D$ has $\mathcal{P}$-property. Since $Y$ is closed in $D$, it follows from our assumption that $Y$ has $\mathcal{P}$-property. Therefore, $X$ is hereditary $\mathcal{P}$.
\end{proof}

\begin{remark}
 It is well-known that normality, paracompactness, Lindel\"{o}f-property, countable compactness, strong paracompactness, countablee paracompactness, weak paracompactness, sequentially compact, realcompact, perfect normality, $k$-spaces are closed hereditary, hence if each dense subset of a space has one of these topological properties then it must be hereditary.
\end{remark}

\begin{proposition}\label{p566}
If $X$ is dense-$\mathcal{P}$, then each dense subspace is also dense-$\mathcal{P}$.
\end{proposition}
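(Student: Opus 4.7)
The plan is to reduce the claim to the transitivity of denseness. Let $D$ be a dense subspace of $X$, and let $E$ be a dense subset of $D$. The goal is to show $E$ has property $\mathcal{P}$ (as a subspace of $D$, equivalently of $X$).

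First I would verify that $E$ is dense in $X$: since $\overline{E}^{X}\supseteq \overline{E}^{D}=D$ and $\overline{D}^{X}=X$, taking closures in $X$ yields $\overline{E}^{X}\supseteq \overline{D}^{X}=X$, so $E$ is dense in $X$. Because $X$ is dense-$\mathcal{P}$, the subspace $E$ of $X$ has property $\mathcal{P}$. Second, I would note that the topology $E$ inherits as a subspace of $D$ coincides with the topology it inherits as a subspace of $X$ (transitivity of the subspace topology), so $E$ has property $\mathcal{P}$ as a subspace of $D$ as well. This shows every dense subset of $D$ has $\mathcal{P}$, i.e.\ $D$ is dense-$\mathcal{P}$.

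There is no real obstacle: the whole argument rests on two standard facts (transitivity of denseness and transitivity of the subspace topology), and neither requires any hypothesis on $\mathcal{P}$ beyond it being a topological property. This is in contrast with Theorem~\ref{t44}, which needed closed-hereditariness; here the hypothesis is automatic because denseness passes through nested subspaces.
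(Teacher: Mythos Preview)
Your argument is correct and is exactly the natural justification the paper has in mind; the paper states this proposition without proof, and your use of the transitivity of denseness together with the transitivity of the subspace topology is the intended reasoning.
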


\begin{proposition}
If $X$ is dense-$\mathcal{P}$ and $\mathcal{P}$ is clopen hereditarily, then each open subspace is dense-$\mathcal{P}$.
\end{proposition}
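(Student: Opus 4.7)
The plan is to take an arbitrary open subspace $U \subseteq X$ and a dense subset $D$ of $U$, and then extend $D$ to a dense subset $D'$ of $X$ in such a way that $D$ appears as a \emph{clopen} subspace of $D'$; then clopen-heredity of $\mathcal{P}$ will finish the job.

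The natural candidate is $D' := D \cup (X\setminus \overline{U})$. First I would verify density in $X$: since $D$ is dense in $U$ (as a subspace), a routine check gives $U \subseteq \overline{D}$ and hence $\overline{U}\subseteq \overline{D}$; combined with the obvious inclusion $X\setminus\overline{U}\subseteq\overline{D'}$, this yields $\overline{D'} \supseteq \overline{U}\cup(X\setminus\overline{U})=X$. By the hypothesis that $X$ is dense-$\mathcal{P}$, the subspace $D'$ has property $\mathcal{P}$.

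Next I would show that $D$ is clopen in $D'$. Openness is immediate: $D = D'\cap U$ and $U$ is open in $X$. For closedness, observe that $D\subseteq U\subseteq \overline{U}$ while $X\setminus\overline{U}$ is disjoint from $\overline{U}$, so $D'\cap \overline{U} = D$; since $\overline{U}$ is closed in $X$, this exhibits $D$ as closed in $D'$. Therefore $D$ is clopen in $D'$, and the assumption that $\mathcal{P}$ is clopen-hereditary gives that $D$ has $\mathcal{P}$. Since $D$ was an arbitrary dense subset of $U$, the open subspace $U$ is dense-$\mathcal{P}$.

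The only genuinely delicate point is the choice of the extension: one has to add just enough of $X$ outside $U$ to make the union dense in $X$, while keeping $D$ separated from the added piece by a closed set so that $D$ remains closed (and not merely open) in $D'$. The set $X\setminus\overline{U}$ is exactly right, since it is disjoint from $\overline{U}$ (giving closedness of $D$ in $D'$) and its union with $\overline{U}$ is all of $X$ (giving density of $D'$). Everything else is a routine subspace-topology computation.
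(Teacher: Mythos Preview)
Your argument is correct and follows essentially the same route as the paper: form $D' = D\cup(X\setminus\overline{U})$, observe it is dense in $X$ so has property $\mathcal{P}$, and then note $D$ is clopen in $D'$ to conclude. You have in fact supplied more careful verification of the density and clopenness claims than the paper does.
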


\begin{proof}
Take any open subset $U$ of $X$ and any dense subset $D$ of $U$. Then $D\cup (X\setminus\overline{U})$ is dense in $X$. Since $X$ is dense-$\mathcal{P}$, it follows that $D\cup (X\setminus\overline{U})$ has dense-$\mathcal{P}$ by Proposition~\ref{p566}. Clearly, $D$ is clopen in $D\cup (X\setminus\overline{U})$, hence $D$ has the $\mathcal{P}$-property. Therefore, each open subspace is dense-$\mathcal{P}$.
\end{proof}

Let $X, Y$ be two spaces and $f: X\rightarrow Y$ a map. We say that $f$ is {\it almost open} (resp., {\it open}) if the interior of $f(U)$ is nonempty in $Y$ (resp., $f(U)$ is open in $Y$) for each nonempty open subset $U$ of $X$. The following proposition is obvious.

\begin{proposition}
Let $f: X\rightarrow Y$ be an almost-open map and $X$ have dense-$\mathcal{P}$. If the topological property $\mathcal{P}$ is preserved by almost-open map, then $Y$ is dense-$\mathcal{P}$.
\end{proposition}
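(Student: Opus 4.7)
The plan is to reduce the statement to a routine application of the hypothesis that $\mathcal{P}$ is preserved by almost-open maps. Fix any dense subset $D$ of $Y$; the goal is to show that $D$ has property $\mathcal{P}$. The natural strategy is to realize $D$ as the almost-open image of a well-chosen subset of $X$ that inherits $\mathcal{P}$ from the dense-$\mathcal{P}$ assumption on $X$, the obvious candidate being $f^{-1}(D)$.

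First I would check that $f^{-1}(D)$ is dense in $X$: given a nonempty open $U\subseteq X$, almost-openness of $f$ produces a nonempty open $O\subseteq Y$ with $O\subseteq f(U)$, and density of $D$ in $Y$ yields some $y\in O\cap D$; lifting $y$ to any $x\in U$ with $f(x)=y$ shows $U\cap f^{-1}(D)\neq\emptyset$. Because $X$ is dense-$\mathcal{P}$, the set $f^{-1}(D)$ has $\mathcal{P}$.

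Next I would verify that the restriction $g=f\uhr f^{-1}(D)\colon f^{-1}(D)\to D$ is a surjective almost-open map (with $f$ taken to be surjective, as is implicit in the framing of the proposition). Surjectivity is automatic. For almost-openness, take a nonempty relatively open $V=W\cap f^{-1}(D)$ with $W$ open in $X$; then $W\neq\emptyset$, so $O=\mathrm{Int}_Y f(W)$ is a nonempty open subset of $Y$, and for any $y\in O\cap D$ there is some $x\in W$ with $f(x)=y\in D$, whence $x\in V$ and $y\in g(V)$. Therefore $O\cap D\subseteq g(V)$, and $O\cap D$ is a nonempty relatively open subset of $D$, so $g(V)$ has nonempty interior in $D$. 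Applying the preservation hypothesis to $g$ shows that $D$ inherits $\mathcal{P}$ from $f^{-1}(D)$, and hence $Y$ is dense-$\mathcal{P}$.

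The only real technical point is the verification that the restriction $g$ is still almost open; this is where one must interplay almost-openness of $f$ with density of $D$ in $Y$ to ensure that images of relatively open subsets of $f^{-1}(D)$ have nonempty interior in the subspace $D$. Everything else is formal, consistent with the proposition being labelled as ``obvious.''
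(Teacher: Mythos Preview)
Your argument is correct and complete: the paper offers no proof beyond the remark that the proposition is obvious, and your write-up supplies exactly the natural details (density of $f^{-1}(D)$ via almost-openness, then almost-openness of the restricted map) that justify that label. Your explicit note that surjectivity of $f$ is being assumed is also appropriate, since the paper's definition of almost-open does not include it but the conclusion visibly requires it.
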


\begin{example}
There exists one-dense-compact space $X$ such that $X$ is not proper one-dense-pseudocompact.
\end{example}

\begin{proof}
Let $X$ be the one-point compactification of infinite discrete space $D$. Clearly, $X$ is compact. However, any proper dense subset is not pseudocompact.
\end{proof}

 \maketitle
\section{dense subsets of pseudocompact spaces}
In this section, we shall prove that each dense-pseudocompact space is finite, hence it is proper one-dense-pseudocompact and one-dense-pseudocompact, but not vice verse, see Remark~\ref{rem4}; moreover, each proper one-dense-pseudocompact is one-dense-pseudocompact, but not vice verse, see Remark~\ref{rem4}.

Clearly, we have the following proposition.

\begin{proposition}
Each one-dense-pseudocompact space is pseudocompact.
\end{proposition}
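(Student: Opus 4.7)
The plan is to unwind the definitions and exploit density together with continuity. Recall that a space $Y$ is pseudocompact when every continuous function $f\colon Y\to\mathbb{R}$ has bounded image, and that $X$ is one-dense-pseudocompact means some dense subspace $D\subseteq X$ is pseudocompact. So the goal is to show that every continuous $f\colon X\to\mathbb{R}$ is bounded.

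First, I would fix an arbitrary continuous function $f\colon X\to\mathbb{R}$ and pass to a witnessing dense pseudocompact $D\subseteq X$. The restriction $f\uhr D\colon D\to\mathbb{R}$ is continuous, and pseudocompactness of $D$ immediately gives that $f(D)$ is a bounded subset of $\mathbb{R}$. This is the only place the assumption is used.

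Next, I would leverage density. Since $D$ is dense in $X$ and $f$ is continuous, a standard argument shows $f(X)\subseteq \overline{f(D)}$: for any $x\in X$ pick a net in $D$ converging to $x$, apply $f$, and use continuity. Because $f(D)$ is bounded, its closure in $\mathbb{R}$ is bounded as well, whence $f(X)$ is bounded. Since $f$ was arbitrary, $X$ is pseudocompact.

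There is essentially no obstacle here — the statement is really just the observation that pseudocompactness passes upward from a dense subspace, a fact that rests solely on the topological inclusion $f(\overline{D})\subseteq\overline{f(D)}$. I would therefore present the proof as a short two-line argument rather than developing any auxiliary machinery.
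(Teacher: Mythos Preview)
Your argument is correct and is exactly the routine density argument the paper has in mind; in fact the paper gives no proof at all, prefacing the proposition with ``Clearly'' and leaving the details implicit. Your two-line justification via $f(X)\subseteq\overline{f(D)}$ is precisely what fills that gap.
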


In order to prove our main theorem in this section, we need a lemma.

\begin{lemma}\label{p11}
A space $X$ is a dense-pseudocompact if and only if each open subspace of $X$ is dense-pseudocompact.
\end{lemma}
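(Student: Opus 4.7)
The plan is to split into the two implications. The reverse direction is immediate: $X$ itself is an open subspace of $X$, so if every open subspace is dense-pseudocompact, then so is $X$.

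For the forward direction, I fix an open subset $U\subseteq X$ and an arbitrary dense subset $D$ of $U$, and I need to show $D$ is pseudocompact. The key construction is the auxiliary set
\[
E \;=\; D\cup (X\setminus\overline{U}),
\]
exactly the device used already in Theorem~\ref{t44} and in the proposition about open subspaces. First I would verify that $E$ is dense in $X$: since $D$ is dense in $U$, its closure in $X$ contains $\overline{U}$, so $\overline{E}\supseteq \overline{U}\cup(X\setminus\overline{U})=X$. By the hypothesis that $X$ is dense-pseudocompact, $E$ is therefore pseudocompact.

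The crucial observation is that $D$ is not merely closed but actually \emph{clopen} in $E$. Indeed, $D\subseteq U\subseteq\overline{U}$ and $X\setminus\overline{U}$ are disjoint, and their union is $E$; both pieces are the traces on $E$ of the open sets $U$ and $X\setminus\overline{U}$ of $X$, so each is open in $E$, hence each is clopen. The main step is then to argue that a clopen subspace of a pseudocompact space is pseudocompact: given a continuous $f\colon D\to\mathbb{R}$, extend it to $\widetilde f\colon E\to \mathbb{R}$ by setting $\widetilde f\equiv 0$ on $X\setminus\overline{U}$; since $D$ and $X\setminus\overline{U}$ are disjoint clopen pieces of $E$, $\widetilde f$ is continuous on $E$, hence bounded by pseudocompactness of $E$, and so $f$ is bounded. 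Thus $D$ is pseudocompact, and $U$ is dense-pseudocompact.

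I expect the only place where one could stumble is precisely that clopen-heredity step: pseudocompactness is notoriously not closed-hereditary (so one cannot simply invoke Theorem~\ref{t44}), and the point of the argument is to exploit that $D$ sits inside $E$ as a clopen piece, which is strictly stronger than being closed and is exactly what lets the ``extend by zero'' trick go through. Everything else (density of $E$ in $X$, the set-theoretic identities for $E$, and the trivial reverse direction) is bookkeeping.
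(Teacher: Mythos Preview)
Your proposal is correct and follows essentially the same route as the paper: both build the auxiliary dense set $E=D\cup(X\setminus\overline{U})$, observe that $D=E\cap U$ is clopen in $E$, and conclude that $D$ inherits pseudocompactness. You supply the extra detail (the ``extend by zero'' argument) that the paper leaves implicit when it writes ``hence $D$ is pseudocompact,'' but the structure of the proof is identical.
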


\begin{proof}
Sufficiency is obvious. Now it suffices to prove the necessity. Take any open subspace $U$ of $X$ and any dense subset $D$ of $U$. Then $Y=D\cup(X\setminus \overline{U})$ is dense in $X$, hence $Y$ is pseudocompact. Since $U\cap Y=D$ and $U$ is open in $X$, it follows that $D$ is open and closed in $Y$, hence $D$ is pseudocompact. Therefore, $U$ is dense-pseudocompact.
\end{proof}

Now we can give a characterization of a dense-pseudocompact space; indeed, it is a finite space.

\begin{theorem}\label{t4}
Let $X$ be a Tychonoff space. Then the following statements are equivalent:
\begin{enumerate}
\smallskip
\item $X$ is dense-pseudocompact;

\smallskip
\item the range of each continuous real-valued function $f$ on $X$ is finite;

\smallskip
\item $X$ is finite;

\smallskip
\item $X$ is hereditarily pseudocompact.
\end{enumerate}
\end{theorem}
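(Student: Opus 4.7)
The plan is to show that (3) implies (1), (2), and (4) trivially---a finite Tychonoff space is discrete, so every subset is finite (hence compact and pseudocompact), and every continuous real-valued function has finite range---and then to establish each of $(1) \Rightarrow (3)$, $(2) \Rightarrow (3)$, and $(4) \Rightarrow (3)$. The three reverse implications share a common preparation: assuming $X$ is infinite and Tychonoff, a standard extraction yields a countably infinite discrete subset $\{y_n : n \in \mathbb{N}\} \subset X$ together with pairwise disjoint open neighborhoods $V_n \ni y_n$, and complete regularity furnishes continuous functions $f_n : X \to [0,1]$ with $f_n(y_n) = 1$ and $f_n \equiv 0$ off $V_n$.

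For $(4) \Rightarrow (3)$, the subspace $\{y_n\}$ is itself an infinite discrete space and therefore not pseudocompact (the map $y_n \mapsto n$ is continuous and unbounded), contradicting hereditary pseudocompactness. For $(1) \Rightarrow (3)$, I would invoke Lemma~\ref{p11} to deduce that every open subspace of $X$ is pseudocompact; then the function $h : \bigcup_n V_n \to \mathbb{R}$ defined piecewise by $h|_{V_n} = n \cdot f_n|_{V_n}$ is continuous (the $V_n$'s are pairwise disjoint open sets whose union is $\bigcup_n V_n$) and unbounded, contradicting pseudocompactness of the open subspace $\bigcup_n V_n$.

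For $(2) \Rightarrow (3)$, I would define $f : X \to [0,1]$ by $f|_{V_n} = (1/n)\,f_n|_{V_n}$ and $f \equiv 0$ on $X \setminus \bigcup_n V_n$; the range of $f$ then contains the infinite set $\{1/n : n \in \mathbb{N}\}$, giving the required contradiction. Continuity at a point of some $V_n$ is immediate, and the main obstacle is continuity at $x \in X \setminus \bigcup_n V_n$; I would handle this by fixing $\epsilon > 0$, choosing $N$ with $1/N < \epsilon$, observing that the continuous function $\max\{f_1, \ldots, f_N\}$ vanishes at $x$, and combining this with the uniform bound $f|_{V_m} \le 1/m$ (which is already below $\epsilon$ for $m \ge N$) to produce a neighborhood of $x$ on which $f < \epsilon$.
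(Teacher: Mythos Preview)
Your proof is correct, and it follows a genuinely different route from the paper's. The paper argues in a chain, proving $(1)\Rightarrow(2)$ and $(2)\Rightarrow(3)$ and declaring the remaining implications clear. For $(1)\Rightarrow(2)$ the paper assumes some continuous $h:X\to\mathbb{R}$ has infinite (hence bounded) range, picks a limit point $r$ of $h(X)$, removes the fiber $h^{-1}(r)$ to obtain an open subspace $U$, and composes $h|_U$ with a tangent-type function blowing up at $r$ to contradict pseudocompactness of $U$ (via Lemma~\ref{p11}). For $(2)\Rightarrow(3)$ the paper observes that condition~(2) forces $X$ to be disconnected, and then iteratively splits $X=U_0\cup V_0$, $U_0=U_1\cup V_1$, \dots\ into clopen pieces, defining $f$ to be $1/(i+1)$ on $V_i$ and $0$ on the residual intersection; continuity comes from the clopen structure.

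Your approach is a hub-and-spoke around $(3)$, and it is more unified: a single preparatory lemma (any infinite Hausdorff space admits points $y_n$ in pairwise disjoint open sets $V_n$) together with Tychonoff bump functions $f_n$ handles all three nontrivial implications at once. This avoids both the tangent composition and the inductive clopen splitting. The paper's argument is slightly more self-contained in that it does not appeal to the discrete-subset extraction lemma, but your version is shorter and makes the common mechanism behind all four conditions transparent. One small cosmetic point: in your continuity argument for $(2)\Rightarrow(3)$ you might state explicitly that for $y\in V_m$ with $m\le N$ one has $f(y)=\tfrac{1}{m}f_m(y)\le f_m(y)\le\max\{f_1(y),\dots,f_N(y)\}$, which is what turns smallness of the max into smallness of $f$; this is implicit in your sketch but worth writing out.
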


\begin{proof}
Clearly, it suffice to prove that (1) $\Rightarrow$ (2) and (2) $\Rightarrow$ (3).

(1) $\Rightarrow$ (2). Suppose not, there exists a continuous real-valued function $h$ on $X$ such that $h(X)$ is an infinite subset of $\mathbb{R}$. Since $X$ is pseudocompact, $h(X)$ is bounded in $\mathbb{R}$, hence there exists a limit point $r\in h(X)$. Without loss of generality, we may assume that there exists a sequence $\{r_{n}: n\in\mathbb{N}\}$ such that $r_{n}<r$ and $r_{n}\rightarrow r$ as $n\rightarrow\infty$. Put $A=h^{-1}(r)$ and $U=X\setminus A$. Then $U$ is open in $X$. Since $X$ is dense-pseudocompact, it follows from Lemma~\ref{p11} that $U$ is dense-pseudocompact. Let $g: h(X)\setminus\{r\}\rightarrow\mathbb{R}$ be a function such that $g(x)=\tan(\frac{2x-r}{2r}\pi)$ for each $x\in (\frac{r}{2}, \frac{3r}{2})\cap (h(X)\setminus\{r\})$ and $g(x)=0$ for any $(h(X)\setminus\{r\})\setminus(\frac{r}{2}, \frac{3r}{2})$. Then $g(x)$ is an unbounded continuous function. Put $h_{1}=h|_{U}$ and $h_{2}=g\circ h_{1}$. Then $h_{2}$ is an unbounded continuous function from $U$ to $\mathbb{R}$, which is a contradiction with $U$ being pseudocompactness.

(2) $\Rightarrow$ (3). Assume $X$ is infinite. Then it is obvious that $X$ is not connected. Next, by induction, we shall construct a continuous real-valued function $f$ on $X$ such that the range of $f$ is infinite, which leads to a contradiction. Indeed, since $X$ is not connected, there exist two disjoint open subsets $U_{0}$ and $V_{0}$ such that $U_{0}\cup V_{0}=X$ and $U_{0}$ is infinite. Assume that we have defined open subsets $U_{i}$ and $V_{i}$ ($i\leq n$) such that the following conditions hold:

\smallskip
(1) $U_{0}\cup V_{0}=X$ and $U_{0}$ is infinite;

\smallskip
(2) for each $1\leq i\leq n$, we have $U_{i}\cup V_{i}=U_{i-1}$;

\smallskip
(3) for each $i\leq n$, $U_{i}\cap V_{i}=\emptyset$ and $U_{i}$ is infinite.

It is obvious that the range of each continuous real-valued function on $U_{n}$ is finite, hence $U_{n}$ is not connected, then there exists two disjoint open subsets $U_{n+1}$ and $V_{n+1}$ such that $U_{n+1}\cup V_{n+1}=U_{n}$ and $U_{n+1}$ is infinite. Put $A=X\setminus \bigcup_{i=0}^{\infty}V_{i}$. Now we have two sequences of open subsets $\{U_{n}: n\in\mathbb{N}\}$ and $\{V_{n}: n\in\mathbb{N}\}$ satisfy the conditions (1)-(3) above. Next we define a real-valued continuous function $f: X\rightarrow \mathbb{R}$ on $X$ as follows: for each $x\in X$, if $x\in A$, then $f(x)=0$; otherwise, there exists an unique $i\in\mathbb{N}$ such that $x\in V_{i}$, then $f(x)=\frac{1}{i+1}$. Clearly, $f$ is continuous and the range of $f$ is infinite.

Therefore, $X$ is finite.
\end{proof}

\begin{corollary}
If $X$ is dense-pseudocompact with $|X|>1$, then $X$ is not connected.
\end{corollary}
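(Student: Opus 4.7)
The plan is to deduce the corollary at once from Theorem~\ref{t4}. Reading the statement under the standing Tychonoff hypothesis of that theorem, the equivalence (1)~$\Leftrightarrow$~(3) tells us that a dense-pseudocompact Tychonoff space $X$ is finite. Consequently, the entire job reduces to the classical observation that a finite Tychonoff space with more than one point is disconnected.

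For that reduction, I would argue as follows. A Tychonoff space is in particular $T_{1}$, so every singleton of $X$ is closed. Since $X$ is finite, every subset of $X$ is a finite union of closed singletons, hence is closed; that is, $X$ carries the discrete topology. Picking any point $x\in X$, the hypothesis $|X|>1$ ensures that $\{x\}$ is a proper nonempty clopen subset of $X$, so the partition $X=\{x\}\cup (X\setminus\{x\})$ witnesses that $X$ is not connected.

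There is essentially no obstacle here: all of the content is carried by Theorem~\ref{t4}, and the remaining step is the standard fact that finite $T_{1}$ spaces are discrete. The one subtle point to flag is that the corollary as written omits the Tychonoff assumption, yet that assumption is indispensable---the two-point indiscrete space is dense-pseudocompact (its only dense subset is itself, which is pseudocompact) and is connected, so the conclusion genuinely requires the separation axiom inherited from the preceding theorem.
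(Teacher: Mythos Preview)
Your argument is correct and matches the paper's approach: the corollary is stated there without proof, immediately after Theorem~\ref{t4}, so the intended derivation is precisely the one you give---use (1)~$\Leftrightarrow$~(3) to conclude $X$ is finite, then observe that a finite $T_{1}$ space with more than one point is discrete and hence disconnected. Your remark that the Tychonoff hypothesis is implicitly inherited from Theorem~\ref{t4} (and is genuinely needed, as the two-point indiscrete space shows) is apt and worth keeping.
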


By Theorem~\ref{t4}, we have the following corollary.

\begin{corollary}
If $X$ is locally dense-pseudocompact, then $X$ is a discrete space.
\end{corollary}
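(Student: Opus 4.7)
The plan is to localize Theorem~\ref{t4}. Fix an arbitrary $x\in X$ and apply the definition of locally dense-pseudocompact with $U=X$: this yields a neighborhood $V_{x}$ of $x$ which is itself dense-pseudocompact. In the Tychonoff setting of this section the subspace $V_{x}$ is again Tychonoff, so Theorem~\ref{t4} applies to $V_{x}$ and forces it to be finite. Hence every point of $X$ admits a finite neighborhood.

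The rest of the argument is routine Hausdorff separation. Pick an open set $W\subset V_{x}$ with $x\in W$, and for each of the finitely many points $y\in V_{x}\setminus\{x\}$ pick an open set $W_{y}\ni x$ with $y\notin W_{y}$. Intersecting $W$ with all of the $W_{y}$ gives an open set which meets $V_{x}$ only in $x$, hence equals $\{x\}$. Since $x$ was arbitrary, every singleton is open, so $X$ is discrete.

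The main ``obstacle'' here is essentially nonexistent: the heavy lifting is done by Theorem~\ref{t4}, and the only thing to double-check is that $V_{x}$ is eligible input for that theorem, i.e.\ that it is a Tychonoff dense-pseudocompact space, both of which are automatic. One could equivalently phrase the argument as: locally dense-pseudocompact implies locally finite by Theorem~\ref{t4}, and any locally finite $T_{1}$ space is discrete.
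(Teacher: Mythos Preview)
Your proof is correct and follows exactly the route the paper indicates: the paper gives no explicit argument, merely stating that the corollary follows from Theorem~\ref{t4}, and your proof is a faithful unpacking of that reference (locally dense-pseudocompact gives a dense-pseudocompact, hence finite, neighborhood of each point, and then Hausdorff separation makes each singleton open). There is nothing to add or correct.
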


However, the following question is still unknown for us.

\begin{question}\label{q0}
Let $G$ be a dense-subgroup-pseudocompact topological group. Is $G$ finite?
\end{question}

Next we give some partial answers to Question~\ref{q0}.

\begin{proposition}
Let $G$ be a separable infinite group. Then $G$ is not dense-subgroup-pseudocompact.
\end{proposition}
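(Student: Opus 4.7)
The plan is to produce a concrete dense subgroup of $G$ that fails to be pseudocompact. Using separability, pick a countable dense subset $D\subseteq G$ and let $H=\langle D\rangle$ be the subgroup algebraically generated by $D$. Then $H$ is countable (the subgroup generated by a countable subset of any abstract group is countable), and since $H\supseteq D$ is dense in $G$, $H$ is a countable dense subgroup of $G$.

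Next I would observe that $H$ must be infinite. Indeed, $G$ is a Hausdorff topological group, so every finite subgroup of $G$ is closed in $G$; if $H$ were finite then $H=\overline{H}=G$ would be finite, contradicting the hypothesis. Hence $H$ is a countably infinite Hausdorff topological group.

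The heart of the argument is then to show that no countably infinite Hausdorff topological group can be pseudocompact, which will exhibit $H$ as a dense subgroup lacking the property and finish the proof. First, $H$ is Tychonoff (Hausdorff topological groups are completely regular), and being countable it is Lindel\"of. In a Tychonoff space, the combination of pseudocompactness and the Lindel\"of property forces compactness, so if $H$ were pseudocompact it would be a compact Hausdorff space. Compact Hausdorff spaces are Baire, but a countable Baire space must contain an isolated point (otherwise it is the countable union of the nowhere dense singletons $\{x\}$). By homogeneity of topological groups under left translation, once one point of $H$ is isolated every point is, so $H$ is discrete and compact, hence finite, contradicting its infinitude.

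The main potential obstacle is really just the auxiliary fact that a countably infinite Hausdorff topological group fails to be pseudocompact; but this reduces to the two standard pieces above --- Lindel\"of $+$ pseudocompact $=$ compact in the Tychonoff category, and the Baire-category plus homogeneity observation that a compact Hausdorff topological group with an isolated point is finite.
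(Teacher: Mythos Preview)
Your proof is correct and follows the same overall strategy as the paper: produce a countable dense subgroup $H$ of $G$, observe that if $G$ were dense-subgroup-pseudocompact then $H$ would be pseudocompact, hence (being countable and Tychonoff) compact, and then derive a contradiction from the compactness of $H$. The only difference is in how the final contradiction is obtained. The paper simply invokes the known fact that every infinite compact Hausdorff topological group has cardinality at least $\mathfrak{c}$, which immediately clashes with $|H|=\aleph_{0}$. You instead prove the needed piece from scratch via the Baire category theorem and homogeneity (compact Hausdorff $\Rightarrow$ Baire $\Rightarrow$ an isolated point exists $\Rightarrow$ $H$ discrete $\Rightarrow$ $H$ finite). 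Your route is more self-contained and elementary; the paper's is shorter but leans on an external structural result about compact groups. You also make explicit the step that $H$ is infinite (via closedness of finite subsets in a Hausdorff group), which the paper uses tacitly.
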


\begin{proof}
Assume $G$ is dense-subgroup-pseudocompact. Since $G$ is separable, it follows that there exists a countable dense subgroup $H$ of $G$, then $H$ is pseudocompact by our assumption. Thus $H$ is compact. However, each compact infinite group with a cardinality at least $\mathfrak{c}$, whcih is a contradiction.
\end{proof}

The density $d(X)$ of a space $X$, which is defined as the smallest cardinal number of the form $|A|$ for each dense subset $A$ of $X$.

\begin{corollary}
If $G$ is dense-subgroup-pseudocompact, then either $G$ is finite or $d(G)>\omega$.
\end{corollary}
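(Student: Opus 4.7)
The plan is to derive this immediately from the preceding proposition by a contrapositive argument, so essentially no new work is required. First I would assume that $G$ is dense-subgroup-pseudocompact and negate the conclusion, i.e., suppose that $G$ is infinite and that $d(G)\leq\omega$. The goal is then to reach a contradiction with the previous proposition.

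Second, from $d(G)\leq\omega$ I would extract a countable (or finite) dense subset of $G$; in either case this witnesses that $G$ is separable in the standard sense (has a countable dense subset). Combined with the assumption that $G$ is infinite, we are in a position to invoke the preceding proposition, which guarantees that a separable infinite group cannot be dense-subgroup-pseudocompact. This contradicts our standing assumption on $G$ and completes the argument.

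The only thing to be careful about is the edge case $d(G)$ finite: one should note that this is still subsumed by \textquotedblleft separable\textquotedblright\ as used in the previous proposition, so the application goes through without modification. There is no real obstacle here, since the statement is essentially a reformulation of the previous proposition stated in the language of the density cardinal $d(G)$; the proof will be only a few lines long.
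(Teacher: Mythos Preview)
Your proposal is correct and matches the paper's approach: the corollary is stated immediately after the proposition with no separate proof, so the intended argument is exactly the contrapositive you describe. Your remark about the edge case $d(G)<\omega$ is harmless but unnecessary, since a finite $d(G)$ would already force $G$ to be finite.
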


Let $(X, \tau)$ be a topological space. The $P$-space topology on $X$
determined by $\tau$ is the smallest topology $P\tau$ on $X$ such that $\tau\subset P\tau$ and every
$G_{\delta}$-subset of $(X, P\tau)$ is $P\tau$-open. The set $X$ with the $P\tau$ topology is denoted $PX$.

From \cite[Theorem 2.9]{CM1989}, it follows that the following proposition holds.

\begin{proposition}
Let $G$ be a pseudocompact topological group. Then $G$ is dense-subgroup-pseudocompact if and only if each dense subgroup $H$ of $G$ is dense in $PG$.
\end{proposition}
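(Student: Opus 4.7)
The plan is to translate the statement directly through the Comfort--Martin criterion \cite[Theorem 2.9]{CM1989}, which in our setting reads: a dense subgroup $H$ of a pseudocompact topological group $G$ is itself pseudocompact if and only if $H$ is $G_{\delta}$-dense in $G$, i.e.\ $H$ meets every non-empty $G_{\delta}$-subset of $G$. The first step I would record is the purely topological observation that, for the $P$-space topology $P\tau$ introduced before the proposition, a subset $H \subset G$ is dense in $PG$ exactly when $H$ is $G_{\delta}$-dense in $G$. This follows from the definition of $P\tau$: a basis of $P\tau$-open sets is given by the $G_{\delta}$-subsets of $(G,\tau)$, so denseness with respect to $P\tau$ is equivalent to meeting every non-empty such $G_{\delta}$-set.

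With this translation in hand, both directions of the equivalence become formal. For the forward direction, assume $G$ is dense-subgroup-pseudocompact and let $H$ be an arbitrary dense subgroup of $G$. Then $H$ is pseudocompact by hypothesis, and since $G$ is pseudocompact, Comfort--Martin forces $H$ to be $G_{\delta}$-dense in $G$, hence dense in $PG$. For the backward direction, assume every dense subgroup $H$ of $G$ is dense in $PG$, i.e.\ $G_{\delta}$-dense in $G$. Applying Comfort--Martin in the other direction, $H$ is pseudocompact, and since this holds for every dense subgroup, $G$ is dense-subgroup-pseudocompact.

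The only nontrivial ingredient is the cited \cite[Theorem 2.9]{CM1989}; everything else is a matter of unwinding definitions, so there is no genuine obstacle beyond recording the $P\tau$-basis observation carefully. A small point to make explicit in the write-up is that $H$ being a subgroup (and not merely a dense subset) is needed to invoke Comfort--Martin, but this is automatic in both directions since we are quantifying over dense subgroups from the start.
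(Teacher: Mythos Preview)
Your proposal is correct and follows exactly the paper's approach: the paper's entire proof is the single sentence ``From \cite[Theorem 2.9]{CM1989}, it follows that the following proposition holds,'' and you have simply unpacked that citation by spelling out the equivalence between $G_{\delta}$-density and density in $PG$. One minor correction: the reference \cite{CM1989} is Comfort--van Mill, not Comfort--Martin.
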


\begin{remark}\label{rem4}
The class of proper one-dense-subgroup-pseudocompact groups was extensive studied, see \cite{CR1982,CR1988,D2019}; in particular, it follows from \cite[Theorem 1.1]{CM2007} that each pseudocompact abelian group $G$ of uncountable weight has a proper dense pseudocompact subgroup, thus $G$ is proper one-dense-subgroup-pseudocompact.
Obviously, each meitrziable pseudocompact infinite group $G$ is one-dense-subgroup-pseudocompact (thus one-dense-pseudocompact), which is not proper one-dense-pseudocompact, hence $G$ is not proper one-dense-subgroup-pseudocompact. Moreover, it follows from \cite[Theorem 1.1]{CM2007} that each pseudocompact, separable, non-metrizble topological group $G$ is proper one-dense-subgroup-pseudocompact (thus proper one-dense-pseudocompact, but it is not dense-pseudocompact by Theorem~\ref{t4}), such as $\mathbb{T}^{\mathfrak{c}}$. However $G$ is not dense-subgroup-pseudocompact; otherwise, there exists a countable pseudocompact dense-subgroup $H$, which implies that $H$ is compact, a contradiction.
\end{remark}

 \maketitle
\section{dense subsets of connected spaces}

In this section, we shall give some partial answers to Problem~\ref{q1}. Indeed, we prove that $G$ is not dense-connected for any non-indiscrete topological group $G$, see Corollary~\ref{c1}. Moreover, we give a characterization of a topological group (resp., paratopological group, quasi-topological group) $G$ such that $G$ is dense-connected.

The following proposition is obvious.

\begin{proposition}
Each one-dense-connected space is connected.
\end{proposition}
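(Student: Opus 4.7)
The plan is to prove the contrapositive: if $X$ is disconnected, then every dense subset of $X$ is disconnected, so $X$ cannot be one-dense-connected. The argument rests on the elementary fact that a dense set meets every non-empty open set.

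First I would suppose $X$ is disconnected and fix a separation $X=U\cup V$ into non-empty disjoint open subsets $U,V$. Let $D$ be any dense subset of $X$. Since $U$ and $V$ are non-empty open, density of $D$ gives $D\cap U\neq\emptyset$ and $D\cap V\neq\emptyset$. Clearly $D\cap U$ and $D\cap V$ are disjoint and both relatively open in $D$, and $(D\cap U)\cup(D\cap V)=D\cap(U\cup V)=D$. Hence $D$ itself is disconnected, which is a contradiction with the assumption that some dense subset is connected.

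This is essentially a one-line observation, so I do not anticipate any real obstacle; the only care needed is to invoke density precisely at the step that guarantees both $D\cap U$ and $D\cap V$ are non-empty. The converse direction (that connectedness of $X$ trivially makes $X$ itself a connected dense subset, so $X$ is one-dense-connected) does not need to be proved here since the proposition only asserts one implication.
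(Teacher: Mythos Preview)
Your argument is correct. The paper itself gives no proof; it simply declares the proposition obvious. The one-line fact presumably intended is that the closure of a connected subspace is connected, so a dense connected $D$ forces $X=\overline{D}$ to be connected. Your contrapositive route via a separation $X=U\cup V$ is equally valid and equally elementary; the only substantive step---that density forces $D\cap U\neq\emptyset$ and $D\cap V\neq\emptyset$---is exactly what you highlighted.
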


Moreover, it is obvious that each dense-connected semitopological group is dense-subgroup-connected. However, the following problems are interesting.

\begin{problem}
Is each dense-subgroup-connected topological group dense-connected?
\end{problem}

\begin{problem}\cite[Open Problem 1.4.3]{AT2008}\label{q1}
Does there exist an infinite (abelian, Boolean) topological group $G$ such that $G$ is dense-subgroup-connected?
\end{problem}

Clearly, $\mathbb{T}$ is one-dense-connected, but it is not proper one-dense-connected. In \cite{W1971}, the author proved that every compact connected Abelian group $G$ except $\mathbb{T}$ contains
a dense proper connected subgroup provided that $w(G)<\mathfrak{c}$ or $w(G)^{\aleph_{0}}=w(G)$, thus $G$ is proper one-dense-subgroup-connected, where $w(G)$ is the weight of $G$; moreover, in \cite{M1986}, the author proved that there exists a proper one-dense connected subgroup $G$ of $\mathbb{R}^{2}$, thus $\mathbb{R}^{2}$ is proper one-dense-subgroup-connected and not dense-subgroup-connected.

For an abelian $G$, we say that $G$ is an {\it $M$-group} if for every integer $m\geq 1$, either $|mG|=1$ or $|mG|\geq\mathfrak{c}$. By \cite[Corollary 1.10]{DS2016}, we have the following proposition.

\begin{proposition}\label{p9}
Let $G$ be a dense-subgroup-connected abelian group. Then each dense subgroup $H$ of $G$ is an $M$-group.
\end{proposition}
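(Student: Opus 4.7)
The plan is to reduce the proposition to a single structural fact about abelian Hausdorff topological groups, namely that every connected such group satisfies the $M$-group condition; this is precisely the content of the cited \cite[Corollary 1.10]{DS2016}, so the proof will be essentially a one-line application of that result. To set it up, I would fix an arbitrary dense subgroup $H$ of $G$. By the hypothesis that $G$ is dense-subgroup-connected, $H$ is connected in its subspace topology, and as a subgroup of the abelian Hausdorff topological group $G$ it is itself an abelian Hausdorff topological group, hence Tychonoff (since every Hausdorff topological group is Tychonoff).

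Next, for each integer $m\geq 1$, I would consider the multiplication-by-$m$ endomorphism $\mu_{m}\colon H\to H$, $x\mapsto mx$, which is continuous by joint continuity of the group operation (and abelianness is what makes $mH$ a subgroup rather than merely a set). Its image $mH$ is a subgroup of $H$ and a continuous image of the connected space $H$, hence connected; as a subspace of a Tychonoff space it is Tychonoff. I would then invoke the classical fact that any nontrivial connected Tychonoff space has cardinality at least $\mathfrak{c}$: separate two distinct points of $mH$ by a continuous $[0,1]$-valued function and note that its image is a connected subset of $[0,1]$ containing $0$ and $1$, hence all of $[0,1]$. This yields $|mH|=1$ or $|mH|\geq\mathfrak{c}$, which is exactly the $M$-group condition for $H$.

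The only genuine ingredient is the cardinality lower bound for nontrivial connected Tychonoff spaces, which is where all the substance sits; everything else is bookkeeping with the definitions of $M$-group and dense-subgroup-connected. In particular, no finer information about $G$ is used beyond the fact that \emph{every} dense subgroup (not only $G$ itself) inherits connectedness, so the same argument applies uniformly to each dense $H$. If one prefers the slickest presentation, the entire proof can be compressed to: each dense subgroup $H$ is a connected abelian Hausdorff topological group by hypothesis, hence an $M$-group by \cite[Corollary 1.10]{DS2016}.
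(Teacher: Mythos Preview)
Your proposal is correct and takes essentially the same approach as the paper: the paper offers no argument beyond the sentence ``By \cite[Corollary 1.10]{DS2016}, we have the following proposition,'' and your compressed version is exactly that citation applied to each dense subgroup $H$. The additional unpacking you give (that $mH$ is a connected Tychonoff subgroup, hence trivial or of size $\geq\mathfrak{c}$) is a faithful expansion of what the cited corollary says, not a different route.
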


The following Theorem~\ref{t1} gives a partial answer to Problem~\ref{q1}. First, we give a concept.

A space with no disjoint open sets will be called {\it hyperconnected}. Clearly, each dense subspace of a hyperconnected space is hyperconnected, hence a space is hyperconnected if and only if it is dense-hyperconnected.

\begin{theorem}\label{t1}
Let $X$ be a space. Then the following statements are equivalent:
\begin{enumerate}
\smallskip
\item $X$ is dense-connected;

\smallskip
\item $\overline{U}=X$ for any non-empty open subset $U$ of $X$;

\smallskip
\item $X$ is hyperconnected.

\smallskip
\item $X$ is dense-hyperconnected.
\end{enumerate}
\end{theorem}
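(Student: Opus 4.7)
The plan is to establish the cycle $(2)\Rightarrow(3)\Rightarrow(4)\Rightarrow(1)\Rightarrow(2)$, since several of the steps are essentially tautological once the correct dense set is exhibited. The equivalence $(2)\Leftrightarrow(3)$ is the standard topological characterization of hyperconnectedness: if every non-empty open set is dense, then any two non-empty open subsets must meet, so $X$ is hyperconnected; conversely, if some non-empty open $U$ fails $\overline{U}=X$, then $X\setminus\overline{U}$ is a non-empty open set disjoint from $U$, violating hyperconnectedness. The equivalence $(3)\Leftrightarrow(4)$ is exactly what is recorded in the sentence just before the theorem, namely that hyperconnectedness passes to dense subspaces, so hyperconnected and dense-hyperconnected coincide.

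For $(3)\Rightarrow(1)$, I would note that a hyperconnected space is connected (a clopen proper non-empty subset would give a disjoint pair of non-empty open sets), combine this with the observation just used for $(3)\Leftrightarrow(4)$ that every dense subspace of a hyperconnected space is hyperconnected, and conclude that every dense subspace of $X$ is connected.

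The heart of the argument is $(1)\Rightarrow(2)$, and this is the step I expect to require the only nontrivial idea. Assume toward a contradiction that $U$ is a non-empty open subset of $X$ with $\overline{U}\neq X$, and put $V=X\setminus\overline{U}$, which is non-empty and open. The key observation is that
\[
D=U\cup V \quad\text{is dense in } X,
\]
because $X=\overline{U}\cup V\subseteq\overline{U}\cup\overline{V}=\overline{D}$. But inside $D$, both $U$ and $V$ are open (being open in $X$), they are disjoint, and they are non-empty, so $\{U,V\}$ is a separation of $D$. Hence $D$ is a disconnected dense subset of $X$, contradicting dense-connectedness. This closes the cycle and yields all four equivalences.

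The main obstacle is nothing more than spotting the correct dense witness $D=U\cup(X\setminus\overline{U})$ in the last step; once this set is in hand every implication reduces to a one-line verification, and no hypotheses beyond those in the statement are needed.
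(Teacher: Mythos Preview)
Your proof is correct and follows essentially the same approach as the paper: the paper proves $(1)\Rightarrow(2)\Rightarrow(3)\Rightarrow(1)$ (with $(3)\Leftrightarrow(4)$ handled by the remark preceding the theorem), using exactly the same dense witness $D=U\cup(X\setminus\overline{U})$ for the key implication $(1)\Rightarrow(2)$. Your only cosmetic difference is routing $(3)\Rightarrow(1)$ through $(4)$ via the observation that hyperconnectedness is inherited by dense subspaces, whereas the paper unpacks this directly; the content is identical.
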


\begin{proof}
It suffice to prove that (1) $\Rightarrow$ (2), (2) $\Rightarrow$ (3) and (3) $\Rightarrow$ (1).

(1) $\Rightarrow$ (2). Let $X$ be dense-connected. Take an any non-empty open subset $U$ of $X$. Then $D=U\cup (X\setminus \overline{U})$ is dense in $X$, hence $D$ is dense-connected by our assumption. Moreover, $U$ is non-empty, open and closed in $D$, then $X\setminus \overline{U}=\emptyset$ since $D$ is connected. Hence $\overline{U}=X$.

(2) $\Rightarrow$ (3). Let $\overline{W}=X$ for any non-empty open subset $W$ of $X$. Take any non-empty open subsets $U$ and $V$ of $X$. Assume that $U\cap V=\emptyset$, then $U\cap \overline{V}=\emptyset$, which is contradiction with $\overline{V}=X$ by our assumption.

(3) $\Rightarrow$ (1). Let $X$ be hyperconnected. Then $U\cap V\neq\emptyset$ for any non-empty open subsets $U$ and $V$ of $X$. Take any dense subset $D$ of $X$. We claim that $D$ is connected. Suppose not, there exist proper open subsets $U_{1}$ and $V_{1}$ of $D$ such that $U_{1}\cap V_{1}=\emptyset$ and $U_{1}\cup V_{1}=D$. Then there exist two open subsets $U$ and $V$ in $X$ such that $U_{1}=U\cap D$ and $V_{1}=V\cap D$. By our assumption, $U\cap V$ is a non-empty open subset in $X$, hence $U\cap V\cap D\neq\emptyset$ since $D$ is dense in $X$. Then $U_{1}\cap V_{1}=(U\cap D)\cap (V\cap D)=U\cap V\cap D\neq\emptyset$, which is a contradiction.
\end{proof}

The following corollary shows that the dense-connected topological group is a indiscrete space.

\begin{corollary}\label{c1}
Let $G$ be a topological group. Then $G$ is dense-connected if and only if $G$ is a indiscrete topological group.
\end{corollary}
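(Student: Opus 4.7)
The plan is to reduce the statement to Theorem~\ref{t1} and then exploit the symmetric-neighborhood structure of a topological group to upgrade hyperconnectedness to indiscreteness. By Theorem~\ref{t1}, dense-connectedness of $G$ is equivalent to $\overline{U}=G$ for every non-empty open $U\subseteq G$, so the corollary amounts to showing that this condition, imposed on a topological group, forces the topology to be indiscrete.

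The sufficiency is immediate: in an indiscrete space the only non-empty open set is the whole space, so the condition in Theorem~\ref{t1}(2) holds trivially. For the necessity I would proceed in two steps. First, I show that the only open neighborhood of the identity $e$ is $G$ itself. Given an open neighborhood $U$ of $e$, joint continuity of multiplication together with continuity of inversion produces a symmetric open neighborhood $V$ of $e$ with $VV\subseteq U$. Since $V$ is non-empty and open, Theorem~\ref{t1} gives $\overline{V}=G$; hence for any $x\in G$ the open set $xV$ meets $V$, so $x\in VV^{-1}=VV\subseteq U$, which forces $U=G$. Second, I finish by homogeneity: for any non-empty open $W\subseteq G$ and any $y\in W$, the set $y^{-1}W$ is an open neighborhood of $e$ and therefore equals $G$, so $W=G$.

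The main obstacle is the symmetric-neighborhood step in the first part of the necessity argument; this is where the group structure really enters and converts the purely topological condition ``every open set is dense'' into ``every neighborhood of $e$ is all of $G$.'' Once that bridge is crossed, the translation argument is automatic, and no further separation, cardinality, or abelian hypotheses on $G$ are required.
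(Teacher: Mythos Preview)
Your proof is correct and follows essentially the same route as the paper: invoke Theorem~\ref{t1}, pick a symmetric open neighborhood $V$ of $e$ with $V^{2}\subseteq U$, and use $\overline{V}=G$ to force $U=G$. The only cosmetic differences are that the paper phrases the necessity as a proof by contradiction and records the key containment as $\overline{V}\subseteq U$, whereas you unroll that inclusion via the $xV\cap V\neq\emptyset$ argument and add an explicit homogeneity step; the mathematical content is the same.
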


\begin{proof}
It suffices to prove the necessity. Assume $G$ is dense-connected. Assume $G$ is not indiscrete, then there exists an proper open neighborhood $U$ of $e$, hence we can find a symmetric open neighborhood $V$ of $e$ such that $V^{2}\subset U$ by the jointly continuous and inverse continuous. It is easy to see that $\overline{V}\subset U$. However, it follows from Theorem~\ref{t1} that $\overline{V}=G$, thus $U=G$, which is a contradiction.
\end{proof}

\begin{corollary}\label{c0}
Let $X$ be a dense-connected space with $|X|>1$. Then $X$ is not Hausdorff.
\end{corollary}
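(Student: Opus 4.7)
The plan is to derive the conclusion almost immediately from Theorem~\ref{t1}, which already gives the equivalence of dense-connected and hyperconnected. So I would argue by contradiction: assume $X$ is a dense-connected Hausdorff space with at least two points, and produce two non-empty open sets that are disjoint, contradicting hyperconnectedness.

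More concretely, I would pick two distinct points $x,y\in X$, which exist because $|X|>1$. By the Hausdorff property, there are open neighborhoods $U$ of $x$ and $V$ of $y$ with $U\cap V=\emptyset$. Both $U$ and $V$ are non-empty. Since $X$ is dense-connected, Theorem~\ref{t1} yields $\overline{U}=X$ and also $X$ is hyperconnected, so any two non-empty open sets must meet. This directly contradicts $U\cap V=\emptyset$. Equivalently, from $\overline{U}=X$ we would conclude $y\in\overline{U}$, but $V$ is an open neighborhood of $y$ disjoint from $U$, so $y\notin\overline{U}$, again a contradiction.

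There is no real obstacle here; the content of the corollary is essentially packaged inside the implication (1)$\Rightarrow$(2) of Theorem~\ref{t1}, and the only thing to check is that Hausdorffness provides the separating open sets needed to violate hyperconnectedness. The proof will be just a few lines.
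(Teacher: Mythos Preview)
Your argument is correct and is exactly the intended one: the paper states Corollary~\ref{c0} without proof, treating it as an immediate consequence of Theorem~\ref{t1}, and your contradiction via two disjoint Hausdorff neighborhoods violating hyperconnectedness is precisely how that immediacy unpacks.
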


By Theorem~\ref{t1}, we have the following three propositions.

\begin{proposition}\label{p5}
If $X$ is a dense-connected subspace of $Y$, then any subset $Z$ with $X\subset Z\subset \overline{X}$ is dense-connected.
\end{proposition}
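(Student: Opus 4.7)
My plan is to reduce the claim to hyperconnectedness via Theorem~\ref{t1}, and then exploit the fact that $X$ is dense in $Z$. By Theorem~\ref{t1}, $X$ dense-connected is equivalent to $X$ being hyperconnected, and likewise for $Z$. So the proposition really says: if $X$ is hyperconnected and $X\subset Z\subset\overline{X}$, then $Z$ is hyperconnected. This reformulation should make the argument almost mechanical.

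First I would observe that since $Z\subset\overline{X}$ (closure taken in $Y$), $X$ is dense in $Z$ with its subspace topology. Then, to verify hyperconnectedness of $Z$, I would take two arbitrary non-empty open subsets $U,V$ of $Z$ and aim to show $U\cap V\neq\emptyset$. Density of $X$ in $Z$ gives that $U\cap X$ and $V\cap X$ are both non-empty, and transitivity of the subspace topology ensures they are open in $X$. Since $X$ is hyperconnected, $(U\cap X)\cap(V\cap X)=U\cap V\cap X$ must be non-empty, so in particular $U\cap V\neq\emptyset$. Thus $Z$ is hyperconnected, and by Theorem~\ref{t1} again, $Z$ is dense-connected.

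There is no real obstacle here; the only thing one must be careful about is distinguishing the various notions of openness and closure. Specifically, one should verify that $U\cap X$ is open \emph{in $X$} whenever $U$ is open in $Z$ (which is just transitivity of subspaces inside $Y$), and that $X$ is dense in $Z$ with respect to the subspace topology of $Z$, not only with respect to $Y$ (which follows since the closure of $X$ in $Z$ equals $\overline{X}\cap Z = Z$). Once these small points are made, the whole argument fits in a few lines and relies entirely on the equivalence (1)$\Leftrightarrow$(3) of Theorem~\ref{t1}.
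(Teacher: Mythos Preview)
Your proof is correct and follows exactly the route the paper indicates: the paper does not give an explicit argument for this proposition but simply states that it follows from Theorem~\ref{t1}, and your reduction to hyperconnectedness via the equivalence (1)$\Leftrightarrow$(3) of that theorem, together with the density of $X$ in $Z$, is precisely the intended justification.
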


\begin{proposition}\label{p9}
If $X$ is dense-connected, then each open subspace is dense-connected.
\end{proposition}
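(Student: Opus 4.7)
The plan is to reduce the problem to the hyperconnected characterization supplied by Theorem~\ref{t1} and then observe that hyperconnectedness is trivially inherited by open subspaces. First I would invoke Theorem~\ref{t1}, which tells us that $X$ being dense-connected is equivalent to $X$ being hyperconnected, i.e., every two non-empty open subsets of $X$ have non-empty intersection (equivalently, $\overline{W}=X$ for every non-empty open $W\subset X$).

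Next I would fix a non-empty open subspace $U$ of $X$ and verify that $U$ is itself hyperconnected. Let $V_{1},V_{2}$ be non-empty open subsets of $U$. Because $U$ is open in $X$, both $V_{1}$ and $V_{2}$ are open in $X$ as well, so by hyperconnectedness of $X$ we obtain $V_{1}\cap V_{2}\neq\emptyset$. Thus $U$ satisfies condition~(3) of Theorem~\ref{t1}, and applying that theorem once more gives that $U$ is dense-connected. The empty open subspace is vacuously dense-connected, so the statement covers all open subspaces.

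There is essentially no serious obstacle; the whole argument is a one-line translation back and forth across the equivalence in Theorem~\ref{t1}, combined with the obvious fact that if $V$ is open in an open subset $U\subset X$ then $V$ is open in $X$. Should one wish to avoid appealing to Theorem~\ref{t1} and argue directly, an alternative is to take a dense subset $D$ of $U$, form the dense subset $E=D\cup(X\setminus\overline{U})$ of $X$, and show that any separation $D=A\sqcup B$ into non-empty relatively open pieces (with $A,B$ traces of open sets contained in $U$) lifts to a separation $E=A\sqcup(B\cup(X\setminus\overline{U}))$, contradicting the dense-connectedness of $X$; but the route through Theorem~\ref{t1} is noticeably cleaner.
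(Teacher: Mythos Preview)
Your argument is correct and matches the paper's own treatment: the paper simply records this proposition as an immediate consequence of Theorem~\ref{t1}, and your reduction to hyperconnectedness (together with the observation that open subsets of open subsets are open) is exactly the intended one-line justification.
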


\begin{proposition}\label{p0}
Let $\tau$ and $\sigma$ be two topologies on the set $X$ such that $\tau\subset\sigma$. If $(X, \sigma)$ is dense-connected, then $(X, \tau)$ is dense-connected.
\end{proposition}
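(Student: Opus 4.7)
The plan is to reduce Proposition~\ref{p0} immediately to the characterization provided by Theorem~\ref{t1}, since that theorem replaces the somewhat awkward ``dense-connected'' condition with the much cleaner hyperconnectedness condition, and hyperconnectedness is manifestly well-behaved under coarsening the topology.

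First, I would invoke Theorem~\ref{t1} to rewrite the hypothesis: the assumption that $(X,\sigma)$ is dense-connected is equivalent to saying that $(X,\sigma)$ is hyperconnected, i.e., any two non-empty $\sigma$-open subsets of $X$ have non-empty intersection. Similarly, the conclusion we want on $(X,\tau)$ is equivalent to showing that any two non-empty $\tau$-open subsets of $X$ meet.

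Next I would pick arbitrary non-empty $\tau$-open subsets $U$ and $V$ of $X$. Because $\tau\subset\sigma$, the sets $U$ and $V$ are also non-empty $\sigma$-open. Applying the hyperconnectedness of $(X,\sigma)$ yields $U\cap V\neq\emptyset$. Thus $(X,\tau)$ is hyperconnected, and a second application of Theorem~\ref{t1} gives that $(X,\tau)$ is dense-connected, completing the proof.

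There is really no hard step here; the only thing worth noting is that without Theorem~\ref{t1} one would have to argue directly about dense subsets, where the difficulty is that a $\tau$-dense set need not be $\sigma$-dense, so transporting the dense-connectedness hypothesis back and forth between the two topologies is not automatic. The characterization via hyperconnectedness bypasses this entirely because that property is expressed purely in terms of open sets and is therefore monotone under refinement of the topology.
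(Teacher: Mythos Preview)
Your proposal is correct and matches the paper's approach: the paper simply states that Propositions~\ref{p5}--\ref{p0} follow directly from Theorem~\ref{t1} without writing out details, and your argument via the hyperconnectedness characterization (condition~(3) of Theorem~\ref{t1}) is exactly the intended one-line deduction.
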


The following example shows that there exists a $T_{1}$-compact dense-connected space.

\begin{example}\label{e2}
Let $X$ be an infinite countable set endowed with the finite complementary topology. Then $X$ is a dense-connected, compact and $T_{1}$-space.
\end{example}

However, the situation is quite different in the class of paratopological groups, see Example~\ref{e0}. Indeed, we also have the following two theorems.

\begin{theorem}\label{t2}
A paratopological group $G$ is dense-connected if and only if $UV^{-1}=G$ for any non-empty open neighborhoods $U$ and $V$ of $e$.
\end{theorem}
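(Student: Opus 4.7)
The plan is to reduce the statement to Theorem~\ref{t1}, which tells us that $G$ is dense-connected if and only if $G$ is hyperconnected, i.e., every pair of non-empty open sets in $G$ meets. So the task becomes showing that hyperconnectedness of $G$ is equivalent to the identity $UV^{-1}=G$ for every pair of non-empty open neighborhoods $U,V$ of $e$. The only structural fact I will draw from the paratopological group axiom is that, although inversion need not be continuous, each left translation $L_g\colon x\mapsto gx$ is a homeomorphism (it is continuous by joint continuity of multiplication, with continuous inverse $L_{g^{-1}}$); in particular $gA$ is open whenever $A$ is open.

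For the forward direction I would assume $G$ is dense-connected, hence hyperconnected. Given non-empty open neighborhoods $U,V$ of $e$ and an arbitrary $g\in G$, the translate $gV$ is open and non-empty, so hyperconnectedness gives a point $u\in U\cap gV$; writing $u=gv$ with $v\in V$ yields $g=uv^{-1}\in UV^{-1}$. Since $g$ was arbitrary, $UV^{-1}=G$.

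For the converse I would assume $UV^{-1}=G$ for all open neighborhoods $U,V$ of $e$ and verify hyperconnectedness directly. Take non-empty open sets $A,B\subset G$ and pick $a\in A$, $b\in B$. Then $U:=a^{-1}A$ and $V:=b^{-1}B$ are open neighborhoods of $e$ (by the translation-homeomorphism remark above). The hypothesis applied to $U$ and $V$ gives $a^{-1}b\in UV^{-1}$, so $a^{-1}b=uv^{-1}$ for some $u\in U$, $v\in V$; equivalently $au=bv$. This common element lies in $aU=A$ and in $bV=B$, so $A\cap B\neq\emptyset$. By Theorem~\ref{t1}, $G$ is dense-connected.

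There is essentially no hard step; the only subtlety, and the reason the formulation uses $UV^{-1}$ rather than something like $UV$, is that one cannot invoke continuity of inversion in a paratopological group, so every manipulation has to be carried out with left (or right) translations alone. This is exactly what allows the two arguments above to go through without ever needing $V^{-1}$ to be open.
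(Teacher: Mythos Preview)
Your proof is correct and proceeds along a somewhat different route than the paper's. The paper works with condition~(2) of Theorem~\ref{t1} (every non-empty open set is dense) together with the standard closure formula $\overline{U}=\bigcap_{W\in\mathcal{N}(e)}UW^{-1}$ in a paratopological group: for the forward direction it observes $G=\overline{U}\subset UV^{-1}$, and for the converse it shows $\overline{O}=G$ for $O=w^{-1}W$ via the same formula and then translates back. You instead invoke condition~(3) (hyperconnectedness) and argue by direct left-translation manipulations, never appealing to the closure formula. Your approach is slightly more elementary and self-contained, since it avoids quoting the identity $\overline{U}=\bigcap_{W}UW^{-1}$; the paper's approach, on the other hand, makes transparent exactly why the inverse appears in $UV^{-1}$, as that is precisely how closures are computed in paratopological groups. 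Both arguments rely only on left translations being homeomorphisms, so neither needs continuity of inversion.
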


\begin{proof}
Assume that $G$ is dense-connected, and assume that $\mathcal{N}(e)$ is the family of all open neighborhoods of $e$. Take any non-empty open neighborhoods $U$ and $V$ of $e$. It is easy to see that $\overline{U}=\bigcap_{W\in\mathcal{N}(e)}UW^{-1}$, thus $\overline{U}\subset UV^{-1}$. By (2) of Theorem~\ref{t1}, we have $\overline{U}=G$, hence $UV^{-1}=G$.

Conversely, assume that $UV^{-1}=G$ for any non-empty open neighborhoods $U$ and $V$ of $e$.  By Theorem~\ref{t1}, it suffices to prove that $\overline{W}=G$ for any nonempty open subset $W$ of $G$. Pick any point $w\in W$ and put $O=w^{-1}W$. Then $O$ is an open neighborhood of $e$, hence $\overline{O}=\bigcap_{W\in\mathcal{N}(e)}OW^{-1}=G$ by our assumption, that is, $\overline{O}=G$. Therefore, $G=w\overline{O}=w\overline{w^{-1}W}=\overline{ww^{-1}W}=\overline{W}$.
\end{proof}

By \cite[Proposition 1.4.5]{AT2008}, we can prove the following theorem by a similar proof of Theorem~\ref{t2}.

\begin{theorem}\label{t3}
A quasitopological group $G$ is dense-connected if and only if $UV=G$ for any non-empty open neighborhoods $U$ and $V$ of $e$.
\end{theorem}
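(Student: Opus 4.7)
The plan is to mirror the proof of Theorem~\ref{t2}, replacing the paratopological closure formula $\overline{A}=\bigcap_{W\in\mathcal{N}(e)} AW^{-1}$ with the appropriate quasitopological analogue, which is essentially the content of \cite[Proposition 1.4.5]{AT2008}. Combined with the characterization $\overline{W}=G$ for all non-empty open $W$ from Theorem~\ref{t1}(2), this will yield both implications.

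First I would establish the key closure formula: in a quasitopological group $G$, one has $\overline{A}=\bigcap_{V\in\mathcal{N}(e)} AV$ for every $A\subset G$. The reason is that separate continuity of multiplication makes every left translation $x\mapsto bx$ a homeomorphism, so $\{bV:V\in\mathcal{N}(e)\}$ is a neighborhood base at $b$; then $b\in\overline{A}$ iff $bV\cap A\neq\emptyset$ for all $V\in\mathcal{N}(e)$, i.e.\ $b\in AV^{-1}$ for all such $V$. Continuity of the inverse now says $V\mapsto V^{-1}$ is a bijection of $\mathcal{N}(e)$ onto itself, so $\bigcap_{V}AV^{-1}=\bigcap_{V}AV$.

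For necessity, assume $G$ is dense-connected and fix non-empty open neighborhoods $U,V$ of $e$. By Theorem~\ref{t1}(2), $\overline{U}=G$. The closure formula gives $\overline{U}\subset UV$, so $UV=G$. For sufficiency, assume $UV=G$ for all non-empty open neighborhoods $U,V$ of $e$, and let $W$ be any non-empty open subset of $G$. Pick $w\in W$ and set $O=w^{-1}W$; since left translation is a homeomorphism, $O$ is an open neighborhood of $e$. Applying the closure formula and the hypothesis $OV=G$ for every $V\in\mathcal{N}(e)$, we obtain $\overline{O}=\bigcap_{V\in\mathcal{N}(e)}OV=G$. Left translation by $w$ is a homeomorphism as well, so $\overline{W}=\overline{wO}=w\overline{O}=wG=G$, and another appeal to Theorem~\ref{t1} finishes the proof.

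The main obstacle, and the point where the quasitopological hypothesis is genuinely used, is the closure formula: in a semitopological group one only gets $\overline{A}=\bigcap_{V} AV^{-1}$, which is useless for concluding $UV=G$ without some way to swap $V$ for $V^{-1}$ inside $\mathcal{N}(e)$. Continuity of the inverse supplies exactly that symmetry, which is why the characterization reads $UV=G$ rather than the $UV^{-1}=G$ of the paratopological case. Once this formula is in hand, the rest of the argument is a direct transcription of the proof of Theorem~\ref{t2}.
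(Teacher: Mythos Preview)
Your proposal is correct and is precisely the argument the paper intends: the paper does not write out a proof but simply says to mirror the proof of Theorem~\ref{t2} using \cite[Proposition~1.4.5]{AT2008}, and that proposition is exactly the closure formula $\overline{A}=\bigcap_{V\in\mathcal{N}(e)}AV$ for quasitopological groups that you derive and then plug into the same two-direction scheme. Your diagnosis of where the inverse-continuity hypothesis enters (turning $AV^{-1}$ into $AV$ inside the intersection) is also the right explanation for why the condition reads $UV=G$ here versus $UV^{-1}=G$ in Theorem~\ref{t2}.
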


\begin{example}\label{e0}
There exists an infinite, $T_{1}$ and non-indiscrete paratopological group $G$ such that $G$ is dense-connected.
\end{example}

\begin{proof}
Indeed, let $G=(\mathbb{R}, +)$ endowed with a topology which has a base as the following family $$\tau=\{x+[y, +\infty)| x, y\in\mathbb{R}\}.$$ Then $(G, \tau)$ is a $T_{1}$ and infinite non-indiscrete paratopological group. By Theorem~\ref{t2}, $G$ is dense-connected.
\end{proof}

\begin{example}\label{e1}
There exists an infinite, $T_{1}$ and non-indiscrete quasitopological group $G$ such that $G$ is dense-connected.
\end{example}

\begin{proof}
Let $G=(\mathbb{Z}, +)$ endowed with a topology which has a base as the following family $$\tau=\{x+\{y: y\in\mathbb{Z}, |y|\geq n\}: x\in\mathbb{Z}, n\in\mathbb{N}\}.$$Then $(G, \tau)$ is an infinite, $T_{1}$ and non-indiscrete quasitopological group. By Theorem~\ref{t3}, $G$ is dense-connected.
\end{proof}

\begin{remark}\label{rem3}
Moreover, it follows from Proposition~\ref{p0} that there exists an infinite, $T_{0}$ and dense-connected paratopological group $G$ which is not $T_{1}$. Indeed, let $G=(\mathbb{R}, +)$ endowed with the following topology $$\delta=\{x+[0, +\infty)| x\in\mathbb{R}\}.$$ Clearly, $(G, \delta)$ is a $T_{0}$ and infinite non-indiscrete paratopological group which is not $T_{1}$. Since $\delta\subset\tau$, where $\tau$ is the topology in Example~\ref{e0}, it follows from Proposition~\ref{p0} that $(G, \delta)$ is dense-connected.
\end{remark}

A space $X$ is said to be {\it Brown} \cite{B2022} if for any nonempty open subsets $U$ and $V$ of $X$ the intersection $\overline{U}\cap \overline{V}$ is infinite. Clearly, each dense-connected infinite space is Brown. However, there exists a Brown space $X$ such that $X$ is not dense-connected. Indeed, there exists a countable Haudorff Brown space $X$, see \cite{B1953}; hence $X$ is not dense-connected by Corollary~\ref{c0}.

The following proposition shows that dense-connectedness is preserved by continuous onto map.

\begin{proposition}\label{p1}
Let $f: X\rightarrow Y$ be a continuous onto map. If $X$ is dense-connected, then $Y$ is dense-connected.
\end{proposition}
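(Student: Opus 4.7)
The plan is to reduce the problem to hyperconnectedness via Theorem~\ref{t1}, which asserts that dense-connectedness is equivalent to hyperconnectedness (no two non-empty open sets are disjoint). This characterization is well-suited to continuous surjections because the hyperconnectedness condition involves only intersections of open sets, and such data pulls back nicely under $f$.

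First I would take two arbitrary non-empty open subsets $V_1, V_2$ of $Y$ and form their preimages $f^{-1}(V_1)$ and $f^{-1}(V_2)$. By continuity of $f$ these are open in $X$, and by surjectivity of $f$ they are non-empty (any point in $V_i$ has a preimage). Since $X$ is dense-connected, Theorem~\ref{t1} tells us $X$ is hyperconnected, so $f^{-1}(V_1)\cap f^{-1}(V_2)\neq\emptyset$. Using $f^{-1}(V_1)\cap f^{-1}(V_2)=f^{-1}(V_1\cap V_2)$, we conclude $V_1\cap V_2\neq\emptyset$, which shows $Y$ is hyperconnected.

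Finally, applying the implication (3) $\Rightarrow$ (1) of Theorem~\ref{t1} to $Y$ yields that $Y$ is dense-connected, completing the proof. There is no substantial obstacle here; the only subtle point is remembering to invoke surjectivity of $f$ to guarantee that the preimages $f^{-1}(V_i)$ are non-empty, which is exactly where the hypothesis that $f$ is onto gets used.
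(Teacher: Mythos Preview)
Your proof is correct and follows essentially the same approach as the paper: both reduce dense-connectedness to one of the equivalent conditions in Theorem~\ref{t1} and then verify that condition for $Y$ via preimages under the continuous surjection $f$. The only cosmetic difference is that the paper verifies condition (2) (every nonempty open set has closure $Y$) while you verify condition (3) (hyperconnectedness); the logical content is the same.
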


\begin{proof}
By Theorem~\ref{t1}, it suffices to prove that $\overline{U}=Y$ for any non-empty open subset $U$ of $Y$. Take any nonempty open subset $U$ of $Y$; then $f^{-1}(U)$ is open in $X$, hence $\overline{f^{-1}(U)}=X$ since $X$ is dense-connected. From the continuity of $f$, it follows that $X=\overline{f^{-1}(U)}\subset f^{-1}(\overline{U})$, thus $Y=f(X)\subset f(f^{-1}(\overline{U}))$, that is, $\overline{U}=Y$. Therefore, $Y$ is dense-connected.
\end{proof}

Next we discuss some property of the dense-connectedness.

\begin{proposition}\label{p2}
Let $\{X_{\alpha}\}_{\alpha\in I}$ be a family of spaces. Then $\Pi_{\alpha\in I}X_{\alpha}$ is dense-connected if and only if each $X_{\alpha}$ is dense-connected.
\end{proposition}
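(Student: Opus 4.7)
The plan is to apply Theorem~\ref{t1} and reformulate the statement in terms of hyperconnectedness: $\prod_{\alpha \in I} X_\alpha$ is dense-connected iff it is hyperconnected, and similarly for each factor. This reduces the proposition to showing that a product is hyperconnected iff every factor is.

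For the necessity, I would use Proposition~\ref{p1}. For each $\beta \in I$, the projection $\pi_\beta : \prod_{\alpha \in I} X_\alpha \to X_\beta$ is continuous and surjective, so if the product is dense-connected then $X_\beta$ is dense-connected for every $\beta$.

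For the sufficiency, I would argue that any two nonempty open sets in the product meet. Reduce to the case of two basic open rectangles $U = \prod_{\alpha} U_\alpha$ and $V = \prod_{\alpha} V_\alpha$ with $U_\alpha = V_\alpha = X_\alpha$ off a common finite set $F \subset I$. Then $U \cap V = \prod_{\alpha} (U_\alpha \cap V_\alpha)$. For $\alpha \notin F$ the coordinate equals $X_\alpha$, which is nonempty; for $\alpha \in F$, both $U_\alpha$ and $V_\alpha$ are nonempty open in the hyperconnected space $X_\alpha$, so by Theorem~\ref{t1} applied to each factor, $\overline{U_\alpha} = X_\alpha$, forcing $U_\alpha \cap V_\alpha \neq \emptyset$. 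Hence $U \cap V \neq \emptyset$, which extends to arbitrary nonempty open sets in the product by picking basic open neighborhoods inside them. Thus the product is hyperconnected, and by Theorem~\ref{t1} it is dense-connected.

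I do not anticipate a serious obstacle: the whole argument hinges on the characterization already established in Theorem~\ref{t1}, which converts dense-connectedness into the purely combinatorial statement that every two nonempty opens intersect — a property that behaves transparently on products and is preserved under continuous surjections. The only point requiring mild care is the reduction to basic open rectangles in the sufficiency direction, but this is standard in the product topology.
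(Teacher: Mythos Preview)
Your proof is correct and follows essentially the same approach as the paper: the necessity is handled via Proposition~\ref{p1} and the projections, and the sufficiency reduces to basic open rectangles and invokes Theorem~\ref{t1} on each factor. The only cosmetic difference is that the paper verifies condition~(2) of Theorem~\ref{t1} for the product (showing $\overline{B}=\prod_{\alpha}X_{\alpha}$ for a single basic open $B$), whereas you verify condition~(3) (showing two basic opens intersect).
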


\begin{proof}
By Proposition~\ref{p1}, the necessity is obvious. Now we prove the sufficiency . By Theorem~\ref{t1}, it suffices to prove that $\overline{U}=\Pi_{\alpha\in I}X_{\alpha}$ for any nonempty open subset $U$ of $\Pi_{\alpha\in I}X_{\alpha}$. Fix any nonempty open subset $U$ of $\Pi_{\alpha\in I}X_{\alpha}$; then there exists a basic open subset $B=\Pi_{\alpha\in F}U_{\alpha}\times \Pi_{\alpha\in I\setminus F}X_{\alpha}$ such that $B\subset U$, where $F$ is a finite subset of $I$ and each $U_{\alpha}$ is a nonempty subset of $X_{\alpha}$. From Theorem~\ref{t1}, it follows that $\Pi_{\alpha\in I}X_{\alpha}=\Pi_{\alpha\in F}\overline{U_{\alpha}}\times \Pi_{\alpha\in I\setminus F}X_{\alpha}\subset \overline{B}\subset \overline{U}$, hence $\overline{U}=\Pi_{\alpha\in I}X_{\alpha}$.
\end{proof}

\begin{proposition}
Let $\{X_{\alpha}\}_{\alpha\in I}$ be a family of dense-connected subspaces of a space $X$. If, for any nonempty open subset $U$ of $X$, we have $U\cap (\bigcup_{\alpha\in I}X_{\alpha})=\emptyset$ or $U\cap X_{\alpha}\neq\emptyset$ for each $\alpha\in I$, then $\bigcup_{\alpha\in I}X_{\alpha}$ is dense-connected.
\end{proposition}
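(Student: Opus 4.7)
Set $Y=\bigcup_{\alpha\in I}X_{\alpha}$, viewed as a subspace of $X$. By Theorem~\ref{t1}, to prove that $Y$ is dense-connected it suffices to show that $Y$ is hyperconnected, i.e.\ that $\overline{W}^{Y}=Y$ for every nonempty open subset $W$ of $Y$. Also by Theorem~\ref{t1}, each $X_{\alpha}$, being dense-connected, is itself hyperconnected, so $\overline{V}^{X_{\alpha}}=X_{\alpha}$ for every nonempty open subset $V$ of $X_{\alpha}$.

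Fix a nonempty open $W\subset Y$ and choose an open subset $U$ of $X$ with $W=U\cap Y$. Since $W\neq\emptyset$ we have $U\cap Y\neq\emptyset$, so the hypothesis forces $U\cap X_{\alpha}\neq\emptyset$ for every $\alpha\in I$. Thus $U\cap X_{\alpha}$ is a nonempty open subset of $X_{\alpha}$, and by hyperconnectedness of $X_{\alpha}$ we obtain $X_{\alpha}=\overline{U\cap X_{\alpha}}^{X_{\alpha}}\subset \overline{U\cap X_{\alpha}}^{X}\subset \overline{W}^{X}$.

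Taking the union over $\alpha\in I$ gives $Y\subset \overline{W}^{X}$, and intersecting with $Y$ yields $Y=\overline{W}^{X}\cap Y=\overline{W}^{Y}$. Hence $Y$ is hyperconnected, and the conclusion follows from Theorem~\ref{t1}. The only delicate point is making sure the hypothesis is used correctly: it is precisely what rules out the possibility that some $X_{\alpha}$ is disjoint from $U$ (which would destroy the argument), and once this is in hand the rest is a direct application of the hyperconnected characterization.
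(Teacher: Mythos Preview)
Your proof is correct and follows essentially the same route as the paper's own argument: reduce via Theorem~\ref{t1} to showing $\overline{W}\cap Y=Y$ for every nonempty open $W\subset Y$, write $W=U\cap Y$, use the hypothesis to get $U\cap X_{\alpha}\neq\emptyset$ for all $\alpha$, and then invoke the hyperconnectedness of each $X_{\alpha}$ to conclude $X_{\alpha}\subset\overline{U\cap X_{\alpha}}$. Your explicit distinction between closures in $X_{\alpha}$, $X$, and $Y$ is a bit more careful than the paper's notation, but the two proofs are otherwise identical.
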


\begin{proof}
Let $Y=\bigcup_{\alpha\in I}X_{\alpha}$. By Theorem~\ref{t1}, it suffices to prove that $\overline{W}\cap Y=Y$ for any nonempty open subset $W$ of $Y$. Take an arbitrary nonempty open subset $W$ of $Y$. Then there exists an open subset $U$ of $X$ such that $W=U\cap Y$. By our assumption and $W\subset Y$, we have $U\cap X_{\alpha}\neq\emptyset$ for each $\alpha\in I$, hence $X_{\alpha}\subset(\overline{U\cap X_{\alpha}})\cap Y$ since $X_{\alpha}$ is dense-connected for each $\alpha\in I$. Therefore, it follows that $$Y=\bigcup_{\alpha\in I}X_{\alpha}\subset\bigcup_{\alpha\in I}(\overline{U\cap X_{\alpha}}\cap Y)\subset\overline{U\cap Y}\cap Y=\overline{W}\cap Y$$
\end{proof}

\begin{proposition}\label{p4}
Let $\{X_{\alpha}\}_{\alpha\in I}$ be a family of dense-connected open subspaces of a space $X$. If $X_{\alpha}\cap X_{\beta}\neq\emptyset$ for any two distinct $\alpha, \beta\in I$, then $\bigcup_{\alpha\in I}X_{\alpha}$ is dense-connected.
\end{proposition}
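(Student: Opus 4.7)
The plan is to apply Theorem~\ref{t1} and show that $Y:=\bigcup_{\alpha\in I}X_{\alpha}$ is hyperconnected, i.e., that $\overline{W}\cap Y = Y$ for every non-empty open subset $W$ of $Y$. Since each $X_{\alpha}$ is open in $X$, each $X_{\alpha}$ is also open in $Y$, so the subspace topology behaves cleanly: open subsets of $X_{\alpha}$ are open in $Y$, and for any open $U\subset Y$, the intersection $U\cap X_{\alpha}$ is open in $X_{\alpha}$.

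Fix a non-empty open $W\subset Y$ and pick some $\alpha_{0}\in I$ with $W\cap X_{\alpha_{0}}\neq\emptyset$. I want to show every point $x\in Y$ lies in $\overline{W}^{Y}$. Let $x\in X_{\beta}$. The case $\beta=\alpha_{0}$ is immediate: $W\cap X_{\alpha_{0}}$ is non-empty open in the hyperconnected space $X_{\alpha_{0}}$ (Theorem~\ref{t1}), so its closure in $X_{\alpha_{0}}$ is all of $X_{\alpha_{0}}$, whence $x\in\overline{W}^{Y}$.

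For $\beta\neq\alpha_{0}$, the key idea is to pivot through $X_{\alpha_{0}}$ using the non-emptiness hypothesis $X_{\alpha_{0}}\cap X_{\beta}\neq\emptyset$. Given any neighborhood $U$ of $x$ in $Y$, the set $U\cap X_{\beta}$ is non-empty open in $X_{\beta}$, and so is $X_{\alpha_{0}}\cap X_{\beta}$; since $X_{\beta}$ is hyperconnected, these two meet, giving $U\cap X_{\alpha_{0}}\neq\emptyset$. Thus $U\cap X_{\alpha_{0}}$ and $W\cap X_{\alpha_{0}}$ are both non-empty open in the hyperconnected $X_{\alpha_{0}}$, so they intersect, yielding $U\cap W\neq\emptyset$. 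Hence $x\in\overline{W}^{Y}$, completing the proof.

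The main conceptual obstacle is not a difficult estimate but rather organizing the two-step ``pivot'': one uses hyperconnectedness of $X_{\beta}$ to push an arbitrary neighborhood of $x$ into $X_{\alpha_{0}}$, and then hyperconnectedness of $X_{\alpha_{0}}$ to intersect it with $W$. The hypothesis that each $X_{\alpha}$ is open is used crucially to ensure that intersections remain open in the appropriate subspace, which is what lets Theorem~\ref{t1} (as hyperconnectedness) be applied inside each $X_{\alpha}$.
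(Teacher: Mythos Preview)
Your proof is correct and follows essentially the same route as the paper's: both pick an $\alpha_{0}$ with $W\cap X_{\alpha_{0}}\neq\emptyset$, use dense-connectedness of $X_{\alpha_{0}}$ to get $X_{\alpha_{0}}\subset\overline{W}$, and then use dense-connectedness of each $X_{\alpha}$ together with the pairwise-intersection hypothesis to conclude $X_{\alpha}\subset\overline{W}$. The only cosmetic difference is that the paper phrases the pivot via the ``every nonempty open set is dense'' characterization (condition~(2) of Theorem~\ref{t1}), while you phrase it via hyperconnectedness (condition~(3)), arguing pointwise with neighborhoods; these are equivalent formulations of the same two-step argument.
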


\begin{proof}
Let $Y=\bigcup_{\alpha\in I}X_{\alpha}$. Clealry, $Y$ is open in $X$. By Theorem~\ref{t1}, it suffices to prove that $\overline{W}\cap Y=Y$ for any nonempty open subset $W$ of $Y$. Take an arbitrary nonempty open subset $W$ of $Y$. Then $W$ is open in $X$. By our assumption and $W\subset Y$, we have $W\cap X_{\alpha_{0}}\neq\emptyset$ for some $\alpha_{0}\in I$, hence $X_{\alpha_{0}}\subset(\overline{W\cap X_{\alpha_{0}}})\cap Y$ since $X_{\alpha_{0}}$ is dense-connected. Therefore, for each $\alpha\in I$, we have $\emptyset\neq X_{\alpha}\cap X_{\alpha_{0}}\subset (\overline{W\cap X_{\alpha_{0}}})\cap Y$; since $X_{\alpha}\cap X_{\alpha_{0}}$ is a nonempty open subset of $X_{\alpha}$, it follows that $$X_{\alpha}\subset(\overline{X_{\alpha}\cap X_{\alpha_{0}}})\cap Y\subset (\overline{W\cap X_{\alpha_{0}}})\cap Y\subset \overline{W}\cap Y.$$ Therefore, $Y=\overline{W}\cap Y$.
\end{proof}

Finally, we discuss the locally dense-connected spaces. By Theorem~\ref{t1}, each dense-connected space is locally dense-connected; moreover, there exists a locally dense-connected space which is not connected, such as, discrete space with at least two points.

Let $X$ be a space and $x\in X$. Then the maximal dense-connected subset, which containing the point $x$, is called the {\it dense-connected component of point $x$}; we denote the dense-connected component of $x$ by $DC(x)$.

\begin{proposition}\label{p6}
Let $X$ be a locally dense-connected space. Then each $DC(x)$ is open and closed.
\end{proposition}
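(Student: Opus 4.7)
The plan is to split the proof into two pieces, closedness and openness, leveraging the characterization of dense-connectedness as hyperconnectedness from Theorem~\ref{t1}. For closedness, note that $DC(x)$ is dense-connected by construction, so Proposition~\ref{p5} applied to the inclusion $DC(x)\subset\overline{DC(x)}$ yields that $\overline{DC(x)}$ is itself dense-connected; the maximality of $DC(x)$ then forces $\overline{DC(x)}\subset DC(x)$, giving $DC(x)=\overline{DC(x)}$.

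For openness, fix an arbitrary $y\in DC(x)$. By local dense-connectedness, $y$ admits a neighborhood $V$ that is dense-connected; passing to the interior of $V$ in $X$ and applying Proposition~\ref{p9} (open subspaces of dense-connected spaces are dense-connected), I may assume $V$ is open in $X$. The goal is to show $V\subset DC(x)$, and by maximality of $DC(x)$ it suffices to show that $V\cup DC(x)$ is itself dense-connected. By Theorem~\ref{t1}, this is equivalent to showing $V\cup DC(x)$ is hyperconnected.

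To verify hyperconnectedness, take two nonempty relatively open subsets $W_1,W_2$ of $V\cup DC(x)$ and write $W_i=U_i\cap(V\cup DC(x))$ with $U_i$ open in $X$. The crucial sub-claim is that $U_i\cap V\neq\emptyset$ for each $i$; once this is established, hyperconnectedness of $V$ gives $U_1\cap U_2\cap V\neq\emptyset$, whence $W_1\cap W_2\neq\emptyset$, as required. The main obstacle is the mixed case in which $U_i$ meets $DC(x)$ but does not obviously meet $V$; this is resolved by observing that $V\cap DC(x)$ contains $y$ and is open in $DC(x)$, and since $DC(x)$ is hyperconnected, $V\cap DC(x)$ must be dense in $DC(x)$. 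Consequently any nonempty open subset of $DC(x)$, in particular $U_i\cap DC(x)$, must meet $V\cap DC(x)$, forcing $U_i\cap V\neq\emptyset$.
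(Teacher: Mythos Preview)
Your proof is correct and takes a somewhat different route from the paper's. The paper does not work directly with $DC(x)$; instead it defines $W$ as the union of all \emph{open} dense-connected subsets of $X$ containing $x$, invokes Proposition~\ref{p4} to see that $W$ is dense-connected, and then argues by contradiction (again via Proposition~\ref{p4}) that $W$ is closed, finally identifying $W$ with $DC(x)$. Your argument instead treats closedness and openness separately on $DC(x)$ itself: closedness comes for free from Proposition~\ref{p5} plus maximality (and, notably, does not use local dense-connectedness at all), while openness is obtained by a direct hyperconnectedness check on $V\cup DC(x)$, which is needed precisely because Proposition~\ref{p4} requires all pieces to be open and you do not yet know this for $DC(x)$. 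Your approach is arguably more transparent for closedness and isolates exactly where local dense-connectedness enters; the paper's approach packages the openness step more compactly by leaning on Proposition~\ref{p4}.
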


\begin{proof}
Take any $x\in X$. Let $$\mathcal{U}=\{U: x\in U\ \mbox{and}\ U\ \mbox{is an open dense-connected subset in}\ X\}$$ and put $W=\bigcup\mathcal{U}$. Since $X$ is locally dense-connected, $W$ is a nonempty open set. From Proposition~\ref{p4}, it follows that $W$ is dense-connected. We claim that $W$ is closed. Suppose not, there exists $w\in \overline{W}\setminus W$. Since $X$ is locally dense-connected, there exists open dense-connected neighborhood $O$ of $w$, then $O\cap W\neq\emptyset$. By Proposition~\ref{p4}, $O\cup W$ is dense-connected, hence $O\cup W\subset W$, which is a contradiction. Then it is easy to see that $W$ is the dense-connected component of $x$.
\end{proof}

By Propositions~\ref{p4} and~\ref{p6}, we have the following proposition.

\begin{proposition}\label{p7}
Let $X$ be a locally dense-connected space. For any two dense-connected components $DC(x)$ and $DC(y)$, we have $DC(x)=DC(y)$ or $DC(x)\cap DC(y)=\emptyset$.
\end{proposition}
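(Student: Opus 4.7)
The plan is to prove the contrapositive form: assume $DC(x)\cap DC(y)\neq\emptyset$ and conclude $DC(x)=DC(y)$. The two ingredients are already in hand, namely Proposition~\ref{p6}, which ensures that in a locally dense-connected space every component $DC(z)$ is open (hence in particular $DC(x)$ and $DC(y)$ are open subspaces of $X$), and Proposition~\ref{p4}, which says the union of two open dense-connected subspaces of $X$ whose intersection is nonempty is again dense-connected.

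So, first I would fix a point $z\in DC(x)\cap DC(y)$. Since $DC(x)$ and $DC(y)$ are open by Proposition~\ref{p6}, and their intersection contains $z$, Proposition~\ref{p4} applied to the two-element family $\{DC(x),DC(y)\}$ yields that $DC(x)\cup DC(y)$ is a dense-connected subspace of $X$. Now $DC(x)\cup DC(y)$ contains $x$, so by the maximality built into the definition of $DC(x)$ we must have $DC(x)\cup DC(y)\subset DC(x)$; this forces $DC(y)\subset DC(x)$. Symmetrically, since the union also contains $y$ and $DC(y)$ is the maximal dense-connected subset through $y$, we get $DC(x)\subset DC(y)$. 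Hence $DC(x)=DC(y)$.

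The only conceptual point that requires any care is the well-definedness of $DC(x)$ as a \emph{maximal} dense-connected subset containing $x$; this is implicit in the statement introduced just before Proposition~\ref{p6} and is not an obstacle because Proposition~\ref{p4} together with the observation that $\{x\}$ is (trivially) dense-connected guarantees that the union of the directed family of all dense-connected sets through $x$ is again dense-connected. Apart from noting this, the argument is a routine maximality plus union-of-overlapping-opens chase, so I do not foresee any serious obstacle.
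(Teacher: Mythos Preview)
Your argument is correct and matches the paper's own approach exactly: the paper simply cites Propositions~\ref{p4} and~\ref{p6} and leaves the routine maximality chase you wrote out to the reader. Your additional remark on the well-definedness of $DC(x)$ is a reasonable caveat but not part of the paper's treatment.
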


By Propositions~\ref{p9} and~\ref{p6}, it is easily verified that the following result holds.

\begin{proposition}\label{p10}
A space $X$ is locally dense-connected if and only if the dense-connected components of all open subspace of $X$ are open.
\end{proposition}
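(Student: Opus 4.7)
The plan is to reduce both implications to Propositions~\ref{p9} and~\ref{p6} by observing that openness of a subspace makes local dense-connectedness pass to the subspace, and that the ambient dense-connected component computed inside an open set is the same thing as its own dense-connected component.

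For the necessity, I would fix an open subspace $U \subset X$ and first check that $U$ is itself locally dense-connected: given $x \in U$ and a neighborhood $W$ of $x$ in $U$, the set $W$ is open in $X$ since $U$ is, so local dense-connectedness of $X$ supplies a dense-connected neighborhood $V \subset W$ of $x$, which is automatically a neighborhood of $x$ in $U$. Then Proposition~\ref{p6} applied inside $U$ gives that $DC_U(x)$ is open (and closed) in $U$ for every $x \in U$.

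For the sufficiency, I would take an arbitrary point $x \in X$ and a neighborhood $O$ of $x$; shrinking, I may assume $O$ is open in $X$. Viewing $O$ as an open subspace, the hypothesis says that the dense-connected component $DC_O(x)$ of $x$ in $O$ is open in $O$, hence open in $X$. Since $DC_O(x)$ is a dense-connected subspace of $X$ (dense-connectedness is intrinsic) contained in $O$ and containing $x$, it serves as the required dense-connected open neighborhood $V$ of $x$ with $V \subset O$, so $X$ is locally dense-connected.

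The only mild obstacle is the bookkeeping between "open in $U$" and "open in $X$" and the implicit identification of $DC_O(x)$ with a subset of $X$, but both are harmless because $O$ (resp.\ $U$) is open in $X$; no new argument beyond Propositions~\ref{p9} and~\ref{p6} is needed.
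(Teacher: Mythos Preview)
Your proposal is correct and follows essentially the same route the paper indicates: the paper merely says the result is easily verified from Propositions~\ref{p9} and~\ref{p6}, and you spell this out by showing that an open subspace of a locally dense-connected space is again locally dense-connected (so Proposition~\ref{p6} applies inside it), and conversely by using the open dense-connected component $DC_O(x)$ as the required neighborhood. One minor slip: a neighborhood $W$ of $x$ in $U$ need not itself be open; what you use is that $W$ is still a neighborhood of $x$ in $X$ because $U$ is open, which is all the argument needs.
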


By Proposition~\ref{p10}, the following theorem holds.

\begin{theorem}\label{t233}
A space $X$ is locally dense-connected if and only if $X$ is the topological sum of a family dense-connected spaces.
\end{theorem}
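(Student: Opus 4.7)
The plan is to decompose $X$ into its dense-connected components and recognize those components as the summands; the two earlier structural results, Propositions~\ref{p6} and~\ref{p7}, do essentially all of the work in one direction, while Proposition~\ref{p9} handles the other.

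For the necessity direction, I start from a locally dense-connected $X$ and enumerate the distinct dense-connected components as $\{C_{\alpha}\}_{\alpha\in I}$. Every point $x$ is covered, since the singleton $\{x\}$ is trivially dense-connected (its only dense subset is itself) and hence lies in $DC(x)$; by Proposition~\ref{p7} the components are pairwise disjoint, and by Proposition~\ref{p6} each $C_{\alpha}$ is clopen in $X$. A clopen partition automatically realizes $X$ as the topological sum $\bigsqcup_{\alpha\in I}C_{\alpha}$: a set $U\subset X$ is open iff $U\cap C_{\alpha}$ is open in $C_{\alpha}$ for every $\alpha$, because $U=\bigcup_{\alpha}(U\cap C_{\alpha})$ and each $C_{\alpha}$ is open. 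Since each $C_{\alpha}$ is itself dense-connected by construction, this exhibits $X$ as a topological sum of dense-connected spaces.

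For the sufficiency direction, suppose $X=\bigsqcup_{\alpha\in I}X_{\alpha}$ with each $X_{\alpha}$ dense-connected. Each $X_{\alpha}$ is open in $X$. Given $x\in X$ and any neighborhood $U$ of $x$, let $\alpha_{0}$ be the unique index with $x\in X_{\alpha_{0}}$, and set $V:=U\cap X_{\alpha_{0}}$. Then $V$ is an open neighborhood of $x$ with $V\subset U$, and, being an open subspace of the dense-connected space $X_{\alpha_{0}}$, it is itself dense-connected by Proposition~\ref{p9}. Hence $X$ is locally dense-connected.

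The only substantive thing to verify is that the dense-connected components really do partition a locally dense-connected space and that the resulting clopen partition coincides with the disjoint-sum topology; both checks are essentially formal once Propositions~\ref{p6},~\ref{p7}, and~\ref{p9} are available, so I do not anticipate any genuine obstacle in the argument.
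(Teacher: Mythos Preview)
Your argument is correct and follows the same decomposition into dense-connected components that the paper uses; the paper merely packages the forward direction via Proposition~\ref{p10} (itself derived from Propositions~\ref{p6} and~\ref{p9}), whereas you appeal to Propositions~\ref{p6}, \ref{p7}, and~\ref{p9} directly, but the substance is identical.
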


The following Proposition~\ref{p3} is easily verified.

\begin{proposition}\label{p3}
Local dense-connectedness is an invariant of open maps.
\end{proposition}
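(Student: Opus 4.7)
The plan is to show that if $f\colon X\to Y$ is a continuous open surjection and $X$ is locally dense-connected, then for every $y\in Y$ and every open neighborhood $U$ of $y$ there is an open dense-connected neighborhood of $y$ lying inside $U$. So I would fix such a $y$ and $U$, pick any preimage $x\in f^{-1}(y)$ (using surjectivity), and observe that $f^{-1}(U)$ is an open neighborhood of $x$ in $X$ by continuity.

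Next, I would invoke the local dense-connectedness of $X$ at the point $x$: there exists an open neighborhood $V$ of $x$ with $V\subset f^{-1}(U)$ and $V$ dense-connected. Then $f(V)$ is open in $Y$ because $f$ is an open map, $f(V)\subset U$ because $V\subset f^{-1}(U)$, and $y=f(x)\in f(V)$; so $f(V)$ is an open neighborhood of $y$ sitting inside $U$.

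To finish, I need to verify that $f(V)$ is itself dense-connected. This is where I would call on Proposition~\ref{p1}: the restriction $f\uhr V\colon V\to f(V)$ is continuous and onto, and $V$ is dense-connected by construction, so $f(V)$ is dense-connected. Combining the three properties, $f(V)$ witnesses local dense-connectedness of $Y$ at $y$.

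There is no real obstacle here; the only thing to double-check is that Proposition~\ref{p1} is being applied correctly to the restriction (continuity is inherited, and surjectivity onto the image is automatic), and that the openness assumption on $f$ is what guarantees $f(V)$ is actually open in $Y$. Continuity of $f$ is used only to pull back $U$, while openness of $f$ is used to push forward $V$; surjectivity is only needed to guarantee a preimage of $y$ exists, which is automatic since $y\in f(X)=Y$.
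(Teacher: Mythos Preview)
Your argument is correct and is precisely the routine verification the paper has in mind: the paper does not actually supply a proof of Proposition~\ref{p3}, merely stating that it ``is easily verified,'' and your use of continuity to pull back $U$, openness to push forward $V$, and Proposition~\ref{p1} for the restriction $f\uhr V$ is the standard way to carry this out.
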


However, the following question is unknown for us.

\begin{question}
Is local dense-connectedness an invariant of quotient maps?
\end{question}

By Proposition~\ref{p3}, we can prove the following proposition by a similar proof of Proposition~\ref{p2}.

\begin{proposition}
Let $\{X_{\alpha}\}_{\alpha\in I}$ be a family of spaces. Then $\Pi_{\alpha\in I}X_{\alpha}$ is locally dense-connected if and only if there exists a finite $F\subset I$ such that $X_{\alpha}$ is locally dense-connected for each $\alpha\in F$ and $X_{\alpha}$ is dense-connected for each $\alpha\in I\setminus F$.
\end{proposition}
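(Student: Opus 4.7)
The plan is to mirror the structure of Proposition~\ref{p2} but keep careful track of the finite support that makes local (rather than global) dense-connectedness work in a product.

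For the necessity, I first reduce to two claims. The first is that every factor $X_\alpha$ is locally dense-connected. This follows immediately from Proposition~\ref{p3}, since each projection $\pi_\alpha : \Pi_{\beta\in I}X_\beta \to X_\alpha$ is a continuous open surjection, and local dense-connectedness is an invariant of open maps. The second and more delicate claim is that all but finitely many factors are in fact dense-connected. To see this, fix any point $x = (x_\alpha)\in \Pi_{\alpha\in I}X_\alpha$, and apply local dense-connectedness at $x$ with the whole product as the ambient neighborhood: we obtain an open dense-connected neighborhood $V$ of $x$. Since $V$ is open in the product topology, it contains a standard basic open set $B = \Pi_{\alpha\in F}W_\alpha \times \Pi_{\alpha\in I\setminus F}X_\alpha$ with $F\subset I$ finite. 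For every $\alpha\in I\setminus F$, we have $\pi_\alpha(V) \supseteq \pi_\alpha(B) = X_\alpha$, so $\pi_\alpha$ restricted to $V$ is a continuous surjection onto $X_\alpha$; by Proposition~\ref{p1}, $X_\alpha$ is dense-connected. This gives exactly the required finite $F$.

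For the sufficiency, suppose $F\subset I$ is finite, with $X_\alpha$ locally dense-connected for $\alpha\in F$ and $X_\alpha$ dense-connected for $\alpha\in I\setminus F$. Let $x = (x_\alpha)$ and let $U$ be any open neighborhood of $x$ in the product. Shrinking, we may assume $U = \Pi_{\alpha\in F'}U_\alpha \times \Pi_{\alpha\in I\setminus F'}X_\alpha$ is basic, and by enlarging $F'$ we may assume $F\subset F'$. For each $\alpha\in F$ choose a dense-connected open neighborhood $V_\alpha\subset U_\alpha$ of $x_\alpha$, using local dense-connectedness. For each $\alpha\in F'\setminus F$ the space $X_\alpha$ is dense-connected, so by Proposition~\ref{p9} the open subspace $U_\alpha$ is itself dense-connected; set $V_\alpha := U_\alpha$. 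Then $V := \Pi_{\alpha\in F'}V_\alpha \times \Pi_{\alpha\in I\setminus F'}X_\alpha$ is an open neighborhood of $x$ contained in $U$, and each coordinate factor is dense-connected, so by Proposition~\ref{p2} the set $V$ is dense-connected. Hence the product is locally dense-connected at $x$.

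The main obstacle I anticipate is the necessity of the \emph{finite} support condition: it must be extracted purely from the topology of a single basic open set $B$ inside a witnessing dense-connected neighborhood $V$, and the argument must not secretly use any extra structure. The key point to state carefully is that $\pi_\alpha$ is surjective on $V$ (not merely on $B$) for every $\alpha$ outside the support of $B$, which is precisely what lets Proposition~\ref{p1} upgrade dense-connectedness of $V$ to dense-connectedness of the factor. Everything else reduces to routine bookkeeping and the already-established Propositions~\ref{p1},~\ref{p2},~\ref{p9} and~\ref{p3}.
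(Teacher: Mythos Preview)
Your proof is correct and is exactly the argument the paper has in mind: the paper itself gives no details beyond ``By Proposition~\ref{p3}, we can prove the following proposition by a similar proof of Proposition~\ref{p2},'' and your write-up is precisely that similar proof---projections handle necessity via Propositions~\ref{p1} and~\ref{p3}, while Propositions~\ref{p2} and~\ref{p9} handle sufficiency.
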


\begin{question}
Is each locally compact connected group $G$ with uncountable weight proper one-dense-subgroup-connected?
\end{question}

\begin{theorem}
If $G$ is a connected, pseudocompact abelian group with $\omega<w(G)\leq\mathfrak{c}$, then $G$ is proper one-dense-subgroup-connected.
\end{theorem}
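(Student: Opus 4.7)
The plan is to reduce the problem to the compact case by passing to the Raikov completion of $G$, apply Wilcox's theorem there, and then pull back to $G$ via a Comfort--Morris style transfinite construction. Let $\widetilde{G}$ denote the completion of $G$. Because $G$ is pseudocompact, it is precompact, and the Comfort--Ross theory gives that $\widetilde{G}$ is compact abelian, $G$ is $G_\delta$-dense and $C$-embedded in $\widetilde{G}$, and $w(\widetilde{G})=w(G)$. The $C$-embedding forces $\widetilde{G}$ to be connected, since any clopen partition of $\widetilde{G}$ would restrict to a clopen partition of $G$. Hence $\widetilde{G}$ is a compact connected abelian group with $\omega<w(\widetilde{G})\leq \mathfrak{c}$.

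Since $w(\widetilde{G})>\omega$, $\widetilde{G}\neq \mathbb{T}$, and the hypothesis $w(\widetilde{G})\leq \mathfrak{c}$ implies either $w(\widetilde{G})<\mathfrak{c}$ or $w(\widetilde{G})^{\aleph_0}=\mathfrak{c}^{\aleph_0}=w(\widetilde{G})$, so Wilcox's theorem \cite{W1971} (cited in the discussion preceding Proposition~\ref{p9}) provides a proper dense connected subgroup $\widetilde{L}$ of $\widetilde{G}$. I would then carry out a transfinite construction, modeled on the argument in \cite{CM2007}, to build a subgroup $H\leq G$ with $H\subseteq \widetilde{L}$ that is (i) dense in $G$, (ii) proper in $G$, and (iii) $G_\delta$-dense in $\widetilde{L}$. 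Condition (iii) forces the closure of $H$ in $\widetilde{G}$ to coincide with $\widetilde{L}$, so $H$ is pseudocompact with completion $\widetilde{L}$; since a pseudocompact abelian group is connected if and only if its completion is connected, $H$ is then connected, and is the proper dense connected subgroup we seek.

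The main obstacle is the simultaneous bookkeeping in this transfinite construction: one must (a) choose generators from $G\cap \widetilde{L}$ hitting every nonempty open subset of $G$ in order to achieve density, (b) adjoin accumulation points inside $H$ for countable sequences drawn from $\widetilde{L}$ to secure the $G_\delta$-density (hence pseudocompactness), and (c) permanently exclude some fixed element of $G\setminus \widetilde{L}$ to preserve properness. The fact that $G$ is $G_\delta$-dense in $\widetilde{G}$ while $\widetilde{L}$ is dense in $\widetilde{G}$ ensures that $G\cap \widetilde{L}$ is rich enough at every stage of the construction to furnish such generators, so the three requirements can be met in parallel. Executing this verification yields the theorem.
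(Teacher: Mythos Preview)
Your sketch contains an internal contradiction. Condition (i) asks that $H$ be dense in $G$, and since $G$ is dense in $\widetilde{G}$, the closure of $H$ in $\widetilde{G}$ is all of $\widetilde{G}$; yet you then assert that condition (iii) makes this closure equal to the \emph{proper} subgroup $\widetilde{L}$, so that the completion of $H$ would be $\widetilde{L}$. These two claims cannot both hold. In fact the Wilcox subgroup $\widetilde{L}$ does no work for connectedness: if $H$ is pseudocompact and dense in $\widetilde{G}$, then $H$ is connected simply because $\widetilde{G}$ is. The only role $\widetilde{L}$ could play is to guarantee properness (via $H\subseteq\widetilde{L}$ together with $G\not\subseteq\widetilde{L}$), but you have established neither that $G\cap\widetilde{L}$ is dense in $G$ nor that $G\setminus\widetilde{L}\neq\emptyset$; the observation that $G$ is $G_\delta$-dense and $\widetilde{L}$ is merely dense in $\widetilde{G}$ does not deliver either conclusion. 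Finally, $G_\delta$-density of $H$ in the non-compact group $\widetilde{L}$ does not yield pseudocompactness of $H$; for that you would need $G_\delta$-density in $\widetilde{G}$.

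The paper's argument sidesteps all of this by quoting two results of Comfort and van Mill \cite{CM1989} directly: their Theorem~4.3 already produces a proper dense pseudocompact subgroup $H$ of $G$ under the hypothesis $\omega<w(G)\le\mathfrak{c}$, and their Corollary~2.3 says that a dense pseudocompact subgroup of a connected pseudocompact abelian group is itself connected. The genuinely delicate step---building a subgroup of $G$ that is $G_\delta$-dense in the completion yet still \emph{proper} in $G$---is precisely the content of \cite[Theorem~4.3]{CM1989}, and it requires more than the bookkeeping you outline.
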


\begin{proof}
From \cite[Theorem 4.3]{CM1989}, it follows that there exists a proper one-dense, pseudocompact subgroup $H$ of $G$, then from \cite[Corollary 2.3]{CM1989},  we conclude that $H$ is connected.
\end{proof}

 \maketitle
\section{dense subsets of pathwise connected spaces and ultraconnected spaces}
In this section, we mainly discuss the dense subsets of pathwise connected spaces and ultraconnected spaces, and give some characterizations of dense-untraconnected spaces. First, we consider the following question.

\begin{question}\label{q2}
How to characterise a space $X$ such that $X$ is (locally) dense-pathwise connected?
\end{question}

Now we give some partial answers to Question~\ref{q2}.

\begin{proposition}\label{l4}
Let $X$ be the finite complement topology. Then $X$ is not dense-pathwise connected.
\end{proposition}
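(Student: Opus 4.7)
The plan is to show that $X$ itself, being a dense subset of $X$, fails to be pathwise connected; this immediately implies that $X$ is not dense-pathwise connected. First, I would observe that in the finite complement topology every singleton $\{x\}$ is closed because its complement $X\setminus\{x\}$ is cofinite, hence open; so for any continuous map $f\colon[0,1]\to X$, each fibre $f^{-1}(\{x\})$ is closed in $[0,1]$, and the nonempty fibres form a partition of $[0,1]$ into pairwise disjoint nonempty closed sets, indexed by the image $Y=f([0,1])$.

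The key step is to argue that this partition must be trivial. As in Example~\ref{e2}, the underlying set of $X$ is infinite countable, so $Y$ is at most countable, and I would split into two cases. If $2\le |Y|<\omega$, then the complement of each fibre is a finite union of the remaining closed fibres, hence itself closed, so every fibre is clopen in $[0,1]$; this contradicts the connectedness of $[0,1]$. If $|Y|=\omega$, then $[0,1]$ would be the disjoint union of countably infinitely many nonempty closed subsets, contradicting the classical Sierpi\'nski theorem on continua (see \cite{E1989}). In either case we reach a contradiction, so $|Y|=1$ and $f$ must be constant.

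Since $|X|\ge 2$ but every continuous map $[0,1]\to X$ is constant, no path can join two distinct points of $X$, and therefore $X$ itself is not pathwise connected. Because $X$ is (trivially) a dense subset of itself, this witnesses that $X$ is not dense-pathwise connected.

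The main obstacle is the invocation of the Sierpi\'nski theorem: the elementary clopen-partition argument dispatches only the finite case, and the countably-infinite case genuinely requires the classical fact that a continuum admits no nontrivial countable closed decomposition. Modulo this appeal, the remainder of the argument reduces to the observation that each singleton is closed in the finite complement topology.
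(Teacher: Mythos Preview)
Your argument is correct for the case you actually treat, namely a \emph{countably infinite} set with the cofinite topology, and in that case it is more explicit than the paper: the paper simply asserts that a countable infinite cofinite space is ``clearly'' not pathwise connected, whereas you supply a genuine proof via Sierpi\'nski's theorem on closed decompositions of $[0,1]$.

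However, the paper proves the proposition for an \emph{arbitrary} set $X$ with the finite complement topology, and here your strategy of showing that $X$ itself is not pathwise connected breaks down. Indeed, when $|X|\ge\mathfrak{c}$ the cofinite space $X$ \emph{is} pathwise connected (any injection $f\colon[0,1]\to X$ with prescribed endpoints has singleton, hence closed, fibres and is therefore continuous); this is precisely why the very next proposition in the paper can assert that uncountable cofinite spaces are proper one-dense-pathwise connected. Your appeal to Example~\ref{e2} to force $X$ countable is an unwarranted restriction of the hypothesis.

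The paper's proof avoids this by a different move: for infinite $X$ it does not examine $X$ itself but instead selects a \emph{countable infinite dense subset} $Y\subset X$. This $Y$ again carries the cofinite topology, and it is $Y$ (not $X$) that fails to be pathwise connected, witnessing failure of dense-pathwise connectedness. Your Sierpi\'nski argument would serve perfectly well to justify the paper's ``Clearly, $Y$ is not pathwise connected''; the missing ingredient in your write-up is simply the reduction step from arbitrary $X$ to a countable dense $Y$.
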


\begin{proof}
If $X$ is finite, then it is obvious. Assume $X$ is infinite, then there exists a countable infinite subset $Y$ which is dense in $X$. Clearly, $Y$ is not pathwise connected. Therefore, $X$ is not dense-pathwise connected.
\end{proof}

The following two results are obvious.

\begin{proposition}
Let $X$ be an uncountable set endowed with the finite complement topology. Then $X$ is proper one-dense-pathwise connected.
\end{proposition}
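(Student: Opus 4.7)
The plan is to produce a proper dense subset $D$ of $X$ and to exhibit, for every pair of points of $D$, a continuous path in $D$ joining them. Since the closed sets in the finite complement topology on $X$ are exactly the finite sets together with $X$ itself, a subset of $X$ is dense iff it is infinite, and the subspace topology on any subset is again the finite complement topology on that subset. A natural candidate is therefore $D=X\setminus\{x_0\}$ for some fixed $x_0\in X$: it is infinite (hence dense in $X$), it is proper, and it carries the cofinite topology.

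To prove $D$ is pathwise connected, the key observation is that a map $f:[0,1]\to D$ is continuous into the cofinite topology on $D$ iff the fiber $f^{-1}(y)$ is closed in $[0,1]$ for every $y\in D$. Indeed, the open sets of $D$ are complements of finite sets, so preimages of open sets are complements of finite unions of fibers, and these are closed exactly when each individual fiber is closed. Fix $a,b\in D$. Assuming $|X|\geq\mathfrak{c}$ (the natural reading of ``uncountable'' in this context), choose any injection $\iota:[0,1]\to D$ with $\iota(0)=a$ and $\iota(1)=b$; this is possible because $|D\setminus\{a,b\}|\geq\mathfrak{c}=|(0,1)|$. Each fiber of $\iota$ is then a singleton or empty, hence closed in $[0,1]$, so $\iota$ is a continuous path from $a$ to $b$ lying entirely in $D$. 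This gives the pathwise connectedness of $D$, and the proposition follows.

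The main obstacle is the cardinality step: the injection $\iota$ requires $|D|\geq\mathfrak{c}$. If one reads ``uncountable'' literally and $|X|$ lies strictly between $\aleph_0$ and $\mathfrak{c}$, then no such injection is available, and the existence of a nonconstant continuous map $[0,1]\to D$ is equivalent to the existence of a partition of $[0,1]$ into $|D|$ nonempty closed sets having $\{0\}$ and $\{1\}$ among its members. By Sierpi\'{n}ski's theorem any such partition must be trivial or have uncountable size, and the existence of partitions of intermediate size is consistent but not provable in ZFC; thus the statement is best read under the convention $|X|\geq\mathfrak{c}$, which renders the injection argument above the cleanest route.
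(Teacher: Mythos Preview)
Your argument is correct and is the standard one: pick a proper cofinite subset $D$, observe that continuity into a cofinite space is exactly closedness of all fibers, and produce an injective path. The paper offers no proof at all---it simply declares the proposition ``obvious''---so there is nothing to compare against at the level of method.

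Your cardinality caveat is well taken and is, in fact, a genuine gap in the paper's formulation rather than in your proof. If $\aleph_0<|X|<\mathfrak{c}$, then any nonconstant path $f:[0,1]\to D$ induces a partition of $[0,1]$ into at most $|D|<\mathfrak{c}$ nonempty closed sets; Sierpi\'{n}ski's theorem forces this partition to be uncountable, and the existence of such partitions of intermediate cardinality is independent of ZFC. Thus the proposition as stated (with ``uncountable'' read literally) is not a ZFC theorem, and your reading $|X|\geq\mathfrak{c}$ is the correct hypothesis under which the injection argument goes through cleanly. The paper appears to have overlooked this point.
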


\begin{lemma}\label{l3}
Let $(X, \tau)$ be a pathwise connected space. If $\delta$ is a topology on $X$ such that $\delta\subset\tau$, then $(X, \delta)$ is also pathwise connected.
\end{lemma}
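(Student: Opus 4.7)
The plan is to show directly, from the definition of pathwise connectedness, that any path in the finer topology $\tau$ remains a path in the coarser topology $\delta$. Given any two points $x, y \in X$, I would first invoke the assumption that $(X, \tau)$ is pathwise connected to produce a continuous map $f \colon [0,1] \to (X, \tau)$ with $f(0) = x$ and $f(1) = y$.

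Next, I would observe that the identity map $\mathrm{id} \colon (X, \tau) \to (X, \delta)$ is continuous. Indeed, if $V \in \delta$, then $V \in \tau$ because $\delta \subset \tau$, so $\mathrm{id}^{-1}(V) = V$ is $\tau$-open. Composing, the map $\mathrm{id} \circ f \colon [0,1] \to (X, \delta)$ is continuous (as a composition of continuous maps), and it still satisfies $(\mathrm{id} \circ f)(0) = x$ and $(\mathrm{id} \circ f)(1) = y$. Since $x$ and $y$ were arbitrary, $(X, \delta)$ is pathwise connected.

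There is essentially no obstacle here; the argument is a one-line consequence of the fact that coarsening the codomain topology preserves continuity. The only thing worth being careful about is verifying that the domain $[0,1]$ plays no role in the argument (it carries its usual topology in both cases, and the comparison $\delta \subset \tau$ is on the target space only), which is immediate from the definitions.
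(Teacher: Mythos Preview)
Your proof is correct and is exactly the standard argument one has in mind here: paths in $(X,\tau)$ remain continuous when the target topology is coarsened, via the continuous identity $(X,\tau)\to(X,\delta)$. The paper itself does not supply a proof of this lemma---it simply declares it obvious---so your write-up is precisely the expected justification.
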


\begin{proposition}
If $X$ is $T_{1}$ and dense-pathwise connected, then $X$ is not separable.
\end{proposition}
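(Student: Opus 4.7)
The natural approach is proof by contradiction, leveraging Sierpiński's classical theorem that a compact connected Hausdorff space cannot be partitioned into countably many (more than one) pairwise disjoint nonempty closed sets.

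First I would assume, toward a contradiction, that $X$ is separable and fix a countable dense subset $D\subseteq X$. By the hypothesis that $X$ is dense-pathwise connected, $D$ is pathwise connected, and as a subspace of the $T_1$ space $X$ it is itself $T_1$. Since a one-point space is separable, the statement is only nontrivial when $|X|>1$; together with $X$ being $T_1$ and $D$ dense, this forces $|D|\geq 2$. Pick distinct points $x,y\in D$ and, using pathwise connectedness, a continuous map $f\colon [0,1]\to D$ with $f(0)=x$ and $f(1)=y$.

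Next I would examine the level sets of $f$. For each $d\in f([0,1])$, the singleton $\{d\}$ is closed in $D$ (since $D$ is $T_1$), so $f^{-1}(\{d\})$ is closed in $[0,1]$. The collection $\{f^{-1}(\{d\}):d\in f([0,1])\}$ therefore forms a partition of $[0,1]$ into nonempty pairwise disjoint closed sets, which is at most countable (because $D$, hence $f([0,1])$, is countable) and contains at least two members (because $x$ and $y$ lie in different fibers).

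Finally I would invoke Sierpiński's theorem: a compact connected Hausdorff space cannot be written as the union of more than one but at most countably many pairwise disjoint nonempty closed subsets. Applying this to $[0,1]$ gives the desired contradiction, showing that $X$ cannot be separable. The main obstacle — really the only non-routine step — is recognizing that the $T_1$ hypothesis is precisely what turns the fibers of $f$ into closed sets so that Sierpiński's theorem becomes applicable; everything else is bookkeeping.
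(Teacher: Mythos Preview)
Your argument is correct (modulo the trivial one-point case, which the paper's proof also tacitly excludes). Sierpi\'nski's theorem applies to $[0,1]$, and the $T_1$ hypothesis indeed makes the fibers $f^{-1}(\{d\})$ closed, giving a countable closed partition of $[0,1]$ with at least two pieces --- a contradiction.

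The paper takes a slightly different, more indirect route. Rather than invoking Sierpi\'nski's theorem directly on the path, it first uses Lemma~\ref{l3} (pathwise connectedness passes to coarser topologies) to replace the subspace topology on the countable dense set $D$ by the cofinite topology, which is the coarsest $T_1$ topology on $D$; it then appeals to the (asserted, not proved) fact from the proof of Proposition~\ref{l4} that a countably infinite cofinite space is not pathwise connected. Your approach folds this detour into a single step: Sierpi\'nski's theorem shows at once that \emph{no} countable $T_1$ space with at least two points admits a nonconstant path, making the reduction to the cofinite case unnecessary. The paper's route is more modular within the paper's own framework (it reuses Lemma~\ref{l3} and Example~\ref{e2}), while yours is more self-contained and makes explicit the real topological obstruction.
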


\begin{proof}
Assume $X$ is separable, then there exists a countable infinite subset $Y$ which is dense in $X$. Then $Y$ is pathwise connected. However, Example~\ref{e2} shows that any finite complement of countable infinite space, which is the coarsest $T_{1}$-topology, is not pathwise connected, which leads to a contradiction by Lemma~\ref{l3}.
\end{proof}

The answer to the following question is interesting.

\begin{question}\label{q4}
Does there exist a $T_{1}$-space $X$ such that $X$ is dense-pathwise connected?
\end{question}

The following Theorem~\ref{t5555} gives a partial answer to Question~\ref{q4}.

Let $X$ be a space and $x, y$ two distinct points of $X$. We say that $x$ and $y$ are not {\it $T_{1}$-points} if there exists a point $z\in\{x, y\}$ such that $\{x, y\}\subset U$ for each open neighborhood $U$ of $z$ in $X$. Moreover, we say that $X$ is a {\it non-separated-points space}. The following two lemmas are easy.

\begin{lemma}\label{l1}
Let $X$ be a space and $x, y$ two distinct points of $X$. If $x$ and $y$ are not $T_{1}$-points, then the map $f: \mathbb{I}\rightarrow X$, defined by $f([0, 1))=\{x, y\}\setminus\{z\}$ and $f(1)=z$, is a continuous map.
\end{lemma}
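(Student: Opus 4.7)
The plan is to verify continuity of $f$ directly by checking that the preimage of every open subset of $X$ is open in $\mathbb{I}$. Setting up notation, let $w$ denote the unique element of $\{x,y\}\setminus\{z\}$, so that $f$ is the map sending $[0,1)$ to $w$ and $1$ to $z$. The hypothesis that $x$ and $y$ are not $T_{1}$-points then reads: every open neighborhood of $z$ in $X$ contains both $z$ and $w$.

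Given any open subset $U$ of $X$, I would split into cases according to membership of $z$ and $w$ in $U$. If $z\in U$, then $U$ is an open neighborhood of $z$, so by hypothesis $w\in U$ as well; hence $f^{-1}(U)=\mathbb{I}$, which is open. The case $z\in U$ but $w\notin U$ therefore cannot arise. If $z\notin U$ and $w\in U$, then $f^{-1}(U)=[0,1)$, which is open in $\mathbb{I}=[0,1]$. Finally, if neither $z$ nor $w$ lies in $U$, then $f^{-1}(U)=\emptyset$. In every admissible case the preimage is open, so $f$ is continuous.

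The main subtlety, such as it is, lies in the asymmetry of the hypothesis: one needs to use the specific point $z$ (not the other one) as the endpoint $f(1)$ so that the preimage of an open set separating $w$ from $z$ is $[0,1)$ and not $\{1\}$; the latter would fail to be open in $\mathbb{I}$. Since the statement is phrased so that $z$ is precisely the point whose neighborhoods absorb $\{x,y\}$, this asymmetry is built into the definition of $f$ and no obstacle remains.
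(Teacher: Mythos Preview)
Your proof is correct. The paper does not actually prove this lemma, remarking only that it is ``easy''; your direct verification by checking preimages of open sets is exactly the straightforward argument the paper is implicitly invoking.
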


\begin{lemma}\label{l2}
If $X$ is a non-separated points space, then each subspace of $X$ is a non-separated point subspace.
\end{lemma}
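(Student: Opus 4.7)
The plan is a short, direct argument from the definitions: \emph{non-separated-points} is defined pointwise (in terms of neighborhoods of one of the two points), and the subspace topology pulls neighborhoods back by intersection, which only \emph{shrinks} neighborhoods. So whatever forces a pair $\{x,y\}$ to lie inside every neighborhood of some $z \in \{x,y\}$ in $X$ will force the same inside $Y$.

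More precisely, let $Y \subseteq X$ be an arbitrary subspace, and take two distinct points $x,y \in Y$. Since $x,y$ are also distinct points of $X$ and $X$ is a non-separated-points space, there exists $z \in \{x,y\}$ such that $\{x,y\} \subset U$ for every open neighborhood $U$ of $z$ in $X$. I claim this same $z$ witnesses that $x,y$ are not $T_1$-points in $Y$. Indeed, any open neighborhood of $z$ in $Y$ has the form $V = U \cap Y$ for some open neighborhood $U$ of $z$ in $X$; then $\{x,y\} \subset U$ by the choice of $z$, and $\{x,y\} \subset Y$ by hypothesis, so $\{x,y\} \subset U \cap Y = V$. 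Hence every pair of distinct points of $Y$ fails to be $T_1$-points in $Y$, i.e.\ $Y$ is a non-separated-points space.

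There is essentially no obstacle here; the only subtle point is reading the definition correctly, namely that being not $T_1$-points is witnessed by at least one of the two points $z \in \{x,y\}$ (the same witness in $X$ transfers to $Y$), and that the subspace topology does not introduce any new open neighborhoods of $z$ in $Y$ beyond traces of those in $X$.
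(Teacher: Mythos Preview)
Your proof is correct and is exactly the natural direct argument from the definitions; the paper itself does not spell out a proof, merely declaring the lemma ``easy,'' so your write-up is the intended one.
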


By Lemmas~\ref{l1} and~\ref{l2}, it is easily verified that the following theorem holds.

\begin{theorem}\label{t5555}
If $X$ is a non-separated points space, then $X$ is dense-pathwise connected.
\end{theorem}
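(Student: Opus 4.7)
The plan is to prove that an arbitrary dense subset $D$ of $X$ is pathwise connected; since $D$ is arbitrary, this will show that $X$ is dense-pathwise connected. Fix two distinct points $x, y \in D$ (the case $x = y$ is handled by a constant path).

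First, invoke Lemma~\ref{l2} to conclude that the subspace $D$ is itself a non-separated-points space. Consequently, the pair $\{x, y\}$ fails to be $T_1$ inside $D$: there exists $z \in \{x, y\}$ such that every open neighborhood of $z$ in the subspace $D$ contains both $x$ and $y$. Next, apply Lemma~\ref{l1} with $D$ in place of $X$. The lemma produces a continuous map $f: \mathbb{I} \to D$ defined by $f([0, 1)) = \{x, y\}\setminus\{z\}$ and $f(1) = z$, which is a path in $D$ joining $x$ and $y$. Since $x, y \in D$ were arbitrary, every pair of points of $D$ is linked by a path in $D$, and $D$ is pathwise connected.

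I do not anticipate any real obstacle, as the theorem is essentially a direct composition of Lemmas~\ref{l1} and~\ref{l2}. The one point meriting care is that Lemma~\ref{l1} must be applied relative to the subspace $D$ rather than $X$; this is legitimate precisely because Lemma~\ref{l2} guarantees that the non-separated-points hypothesis of Lemma~\ref{l1} is inherited by arbitrary subspaces of $X$.
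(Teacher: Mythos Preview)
Your proposal is correct and follows exactly the approach the paper indicates: the paper simply states that the theorem ``is easily verified'' by Lemmas~\ref{l1} and~\ref{l2}, and you have spelled out precisely how those two lemmas combine (pass to the dense subspace $D$ via Lemma~\ref{l2}, then join any two points of $D$ by the path furnished by Lemma~\ref{l1}).
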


\begin{corollary}
If $G$ is a non-$T_{1}$ semitopological group, then $G$ is dense-pathwise connected.
\end{corollary}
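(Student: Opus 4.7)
The plan is to deduce this corollary from Theorem~\ref{t5555} by verifying that every non-$T_1$ semitopological group $G$ is a non-separated-points space.

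The first step is to record the homogeneity of $G$: in any semitopological group the left and right translations $L_a\colon x\mapsto ax$ and $R_a\colon x\mapsto xa$ are continuous bijections whose inverses $L_{a^{-1}}$ and $R_{a^{-1}}$ are again continuous, hence they are homeomorphisms of $G$. It follows that $\overline{\{x\}}=x\overline{\{e\}}$ for every $x\in G$, so $G$ is $T_1$ if and only if $\overline{\{e\}}=\{e\}$.

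The second step uses the hypothesis: since $G$ is not $T_1$, pick $g\in\overline{\{e\}}\setminus\{e\}$, so every open neighborhood of $g$ contains $e$; the pair $\{e,g\}$ is then a pair of ``not $T_1$-points'' with witness $z=g$ in the sense of Lemma~\ref{l1}.

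The final step is to transfer this property to an arbitrary pair of distinct points $x,y\in G$: by a suitable translation one wants to move $(e,g)$ to a configuration involving $x$ and $y$ so that $(x,y)$ also becomes a pair of not $T_1$-points; every pair of distinct points of $G$ would then qualify, and Theorem~\ref{t5555} would yield that $G$ is dense-pathwise connected. The main obstacle lies precisely here: in a general semitopological group $\overline{\{e\}}\cup\overline{\{e\}}^{-1}$ need not cover $G$, so $(x,y)$ need not be directly not $T_1$-points. To handle this one would, given a dense $D\subset G$ and $x,y\in D$, use density to produce an intermediate $m\in D$ lying in every open neighborhood of both $x$ and $y$, and then apply Lemma~\ref{l1} twice to obtain paths $x\leadsto m$ and $m\leadsto y$ in $D$ whose concatenation is the desired path.
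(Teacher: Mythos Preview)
Your first two steps are fine, and you correctly locate the real difficulty: from the single pair $(e,g)$ of not-$T_1$-points one cannot conclude that \emph{every} pair of distinct points of $G$ is such a pair, which is what Theorem~\ref{t5555} needs. Writing $N=\overline{\{e\}}$ and using $\overline{\{x\}}=xN$, the condition ``$(x,y)$ is a pair of not-$T_1$-points'' translates to $x^{-1}y\in N\cup N^{-1}$, and nothing in the hypothesis forces $N\cup N^{-1}=G$.

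The workaround you sketch does not close this gap. You seek $m\in D$ lying in every open neighborhood of both $x$ and $y$; this means $x\in\overline{\{m\}}=mN$ and $y\in mN$, i.e.\ $m\in xN^{-1}\cap yN^{-1}$, and there is no reason for this intersection to be nonempty, let alone to meet $D$. Concretely, take $G=H\times\mathbb{R}$, where $H$ is any nontrivial group with the indiscrete topology and $\mathbb{R}$ carries its usual topology. This is a non-$T_1$ topological (hence semitopological) group with $N=H\times\{0\}=N^{-1}$; for $x=(e_H,0)$ and $y=(e_H,1)$ one has $xN^{-1}\cap yN^{-1}=\emptyset$. More to the point, $G$ is not hyperconnected (the open sets $H\times(0,1)$ and $H\times(2,3)$ are disjoint), so by Theorem~\ref{t1} it is not dense-connected and a fortiori not dense-pathwise connected. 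Thus the corollary as stated is false, and no argument along these lines can succeed. The paper gives no proof beyond listing the statement after Theorem~\ref{t5555}; the implicit claim that every non-$T_1$ semitopological group is a non-separated-points space is incorrect, as your own ``main obstacle'' remark already anticipated.
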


\begin{remark}
(i) Obviously, each dense-pathwise connected is proper one-dense-pathwise connected, and each proper one-dense-pathwise connected is one-dense-pathwise connected, but not vice verse. Clearly, Example~\ref{e2} is a dense-connected space which is not pathwise connected; indeed, it is not one-dense-pathwise connected.

\smallskip
(ii) Each pathwise connected space is one-dense-pathwise connected. In \cite{DS2016}, the concept of one-dense-pathwise connected is called densely pathwise connected, which plays an important role in the study of \cite{DS2016}, where they proved that every abelian $M$-group of infinite exponent admits an one-dense-pathwise connected, locally one-dense-pathwise connected group topology.

\smallskip
(iii) Since each pathwise connected space is connected, it follows from Corollary~\ref{c0} that each dense-pathwise connected space is not Hausdorff.

\smallskip
(iv) Clearly, each nontrivial discrete space is locally dense-pathwise connected and not dense-pathwise connected; from Theorem~\ref{t1}, it follows that each dense-pathwise connected space is locally dense-pathwise connected. Moreover, we can prove that a space $X$ is locally dense-pathwise connected if and only if $X$ is the topological sum of dense-pathwise connected spaces by a similar proof of Theorem~\ref{t233}.
\end{remark}

Finally, we give a characterization of a space $X$ such that $X$ is dense-ultraconnected. A space $X$ is said to be {\it ultraconnected} if it has no disjoint closed subsets of $X$. Clearly, each ultraconnected space is connect.

\begin{theorem}\label{t22}
Let $X$ be a space. Then the following statements are equivalent:
 \begin{enumerate}
\smallskip
\item $X$ is dense-ultraconnected;

\smallskip
\item for point $x\in X$, we have $\overline{\{x\}}=X$ or $\{x\}\cup (X\setminus\overline{\{x\}})$ is the unique open neighborhood of $x$ in $\{x\}\cup (X\setminus\overline{\{x\}})$;

\smallskip
\item for any two points $x$ and $y$ in $X$, we have $x\in \overline{\{y\}}$ or $y\in \overline{\{x\}}$.
\end{enumerate}
\end{theorem}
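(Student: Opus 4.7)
My plan is to prove $(2)\Leftrightarrow(3)$ by a nearly formal unpacking of (2), and to establish $(1)\Leftrightarrow(3)$ with the bulk of the work in the nontrivial direction $(1)\Rightarrow(3)$. First, write $W_x:=\{x\}\cup(X\setminus\overline{\{x\}})$. An open neighborhood of $x$ in the subspace $W_x$ has the form $U\cap W_x$ with $U$ open in $X$ and $x\in U$, and it equals $W_x$ precisely when $W_x\subseteq U$. Hence (2) says: whenever $\overline{\{x\}}\neq X$, every $y\notin\overline{\{x\}}$ belongs to every open neighborhood of $x$, equivalently $x\in\overline{\{y\}}$. Under this reformulation, $(3)\Rightarrow(2)$ is immediate, and $(2)\Rightarrow(3)$ reads backwards: if $y\notin\overline{\{x\}}$, then $\overline{\{x\}}\neq X$ and (2) yields $x\in\overline{\{y\}}$.

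For $(3)\Rightarrow(1)$, let $D$ be dense in $X$ and suppose $A_1,A_2$ are disjoint nonempty closed subsets of $D$. Write $A_i=F_i\cap D$ for some $F_i$ closed in $X$, and pick $a_i\in A_i$. By (3), we may assume $a_1\in\overline{\{a_2\}}\subseteq F_2$, so $a_1\in F_2\cap D=A_2$, contradicting $A_1\cap A_2=\emptyset$.

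The main work is $(1)\Rightarrow(3)$, which I would prove contrapositively. Suppose $x\neq y$ with $x\notin\overline{\{y\}}$ and $y\notin\overline{\{x\}}$, and set $D:=X\setminus(\overline{\{x\}}\cap\overline{\{y\}})$. The crucial step is the density of $D$: if some nonempty open $W$ were contained in $\overline{\{x\}}\cap\overline{\{y\}}$, then any $w\in W$ would satisfy $w\in\overline{\{x\}}$, so the open neighborhood $W$ of $w$ would have to meet $\{x\}$, forcing $x\in W\subseteq\overline{\{y\}}$, a contradiction. Since $x\in\overline{\{x\}}\setminus\overline{\{y\}}\subseteq D$ and $y\in\overline{\{y\}}\setminus\overline{\{x\}}\subseteq D$, the sets $\overline{\{x\}}\cap D$ and $\overline{\{y\}}\cap D$ are disjoint nonempty closed subsets of $D$, so $D$ is not ultraconnected, contradicting (1). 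The only nontrivial guess in this argument is the choice of $D$: the naive candidates $\{x,y\}$ or $\{x,y\}\cup(X\setminus(\overline{\{x\}}\cup\overline{\{y\}}))$ fail density in general, whereas removing just $\overline{\{x\}}\cap\overline{\{y\}}$ is forced to produce a dense set by the very definition of closure.
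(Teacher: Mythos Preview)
Your proof is correct. The equivalence $(2)\Leftrightarrow(3)$ via the reformulation of (2) is clean and essentially matches what the paper does for $(2)\Rightarrow(3)$; your $(3)\Rightarrow(1)$ spells out what the paper simply calls obvious.

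The genuine difference is in the ``hard'' implication out of (1). The paper proves $(1)\Rightarrow(2)$ directly: for a fixed $x$ with $\overline{\{x\}}\neq X$ it uses the dense set $Y=W_x=\{x\}\cup(X\setminus\overline{\{x\}})$, observes that $\{x\}$ is closed in $Y$, and notes that a proper open neighborhood $U$ of $x$ in $Y$ would produce the disjoint nonempty closed pair $\{x\}$ and $Y\setminus U$. You instead prove $(1)\Rightarrow(3)$ directly with a different dense witness $D=X\setminus(\overline{\{x\}}\cap\overline{\{y\}})$, using the nice observation that any nonempty open set contained in $\overline{\{x\}}$ must contain $x$. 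Your choice of $D$ is tailored to the two-point formulation (3) and yields the disjoint closed pair $\overline{\{x\}}\cap D$, $\overline{\{y\}}\cap D$ in one stroke; the paper's choice of $Y$ is tailored to the one-point formulation (2) and is arguably more natural given that (2) already hands you $W_x$. Both arguments are short and self-contained; yours has the minor advantage of bypassing (2) entirely, while the paper's route makes the role of the set $W_x$ in statement (2) transparent.
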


\begin{proof}

(3) $\Rightarrow$ (1). It is obvious. We only need to prove (1) $\Rightarrow$ (2) and (2) $\Rightarrow$ (3).

(1) $\Rightarrow$ (2). Assume $X$ is dense-ultraconnected. Take any point $x\in X$. Assume that $\overline{\{x\}}\neq X$, then $X\setminus\overline{\{x\}}$ is a nonempty open set in $X$. Put $Y=\{x\}\cup (X\setminus \overline{\{x\}})$. Then $Y$ is dense in $X$, hence $Y$ is ultraconnected and $\{x\}$ is closed in $Y$. Assume that there exists an open neighborhood $U$ of $x$ in $Y$ such that $U\neq Y$, then $Y\setminus U$ is nonempty and closed in $Y$ and $(Y\setminus U)\cap \{x\}=\emptyset$, which is a contradiction. Therefore, $\{x\}\cup (X\setminus\overline{\{x\}})$ is the unique open neighborhood of $x$ in $\{x\}\cup (X\setminus\overline{\{x\}})$.

(2) $\Rightarrow$ (3). Take any two points $x$ and $y$ in $X$. Now assume that $x\not\in \overline{\{y\}}$ and $y\not\in \overline{\{x\}}$. Therefore, from our assumption, it follows that $\{x\}\cup (X\setminus\overline{\{x\}})$ and $\{y\}\cup (X\setminus\overline{\{y\}})$ are the unique open neighborhoods of $x$ and $y$ in $\{x\}\cup (X\setminus\overline{\{x\}})$ and $\{y\}\cup (X\setminus\overline{\{y\}})$ respectively. Then $y\in\{x\}\cup (X\setminus\overline{\{x\}})$ and $x\in\{y\}\cup (X\setminus\overline{\{y\}})$, hence it is easily verified that $x\in \overline{\{y\}}$ and $y\in \overline{\{x\}}$, which is a contradiction.
\end{proof}

\begin{corollary}
Let $G$ be a quasitopological group. Then $G$ is dense-ultraconnected if and only if $G$ is a indiscrete space.
\end{corollary}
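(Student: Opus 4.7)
The sufficiency is immediate: if $G$ is indiscrete then $\overline{\{x\}} = G$ for every $x \in G$, so the third condition of Theorem~\ref{t22} holds trivially and $G$ is dense-ultraconnected.

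For the necessity, my plan is to use Theorem~\ref{t22}(3) together with the two defining properties of a quasitopological group, namely that left and right translations are homeomorphisms (since multiplication is separately continuous) and that inversion is a homeomorphism. The first step is to show that $\overline{\{e\}} = G$. Because inversion is a homeomorphism sending $\{e\}$ to itself, we get $\overline{\{e\}}^{-1} = \overline{\{e\}}$, so $\overline{\{e\}}$ is symmetric. Applying Theorem~\ref{t22}(3) with $x = e$ and arbitrary $y \in G$ yields either $y \in \overline{\{e\}}$, or $e \in \overline{\{y\}} = y\overline{\{e\}}$ (using that left translation by $y$ is a homeomorphism); in the latter case $y^{-1} \in \overline{\{e\}}$, and by the symmetry just noted $y \in \overline{\{e\}}$ as well. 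Hence $\overline{\{e\}} = G$.

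The second step is to deduce indiscreteness from this. The equality $\overline{\{e\}} = G$ says that every nonempty open subset of $G$ contains $e$. Given any open neighborhood $W$ of $e$ and any $y \in G$, the set $yW$ is an open neighborhood of $y$ (again because left translation is a homeomorphism), hence contains $e$, which forces $y^{-1} \in W$. As $y$ ranged over $G$, this gives $W \supseteq G^{-1} = G$, so $W = G$. Translating again, every open neighborhood of every point equals $G$, so the only open sets are $\emptyset$ and $G$.

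I do not anticipate a genuine obstacle here; the whole argument is a short translation-and-symmetry computation built on top of Theorem~\ref{t22}. The only point that needs mild care is making sure both halves of the dichotomy in Theorem~\ref{t22}(3) force $y \in \overline{\{e\}}$, which is exactly where the continuity of inversion (not available in the merely paratopological setting) is used.
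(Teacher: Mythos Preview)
Your proof is correct and follows essentially the same route as the paper: both invoke Theorem~\ref{t22}(3) and use that in a quasitopological group inversion is a homeomorphism (making $\overline{\{e\}}$ symmetric) and translations are homeomorphisms (so $\overline{\{y\}}=y\,\overline{\{e\}}$), which collapses the dichotomy ``$x\in\overline{\{y\}}$ or $y\in\overline{\{x\}}$'' into ``$\overline{\{x\}}=G$ for all $x$''. The paper compresses this into a single sentence, whereas you spell out the symmetry argument and add an explicit (slightly longer than necessary) second step to pass from $\overline{\{e\}}=G$ to indiscreteness; noting that $\overline{\{x\}}=x\,\overline{\{e\}}=G$ for every $x$ already gives indiscreteness directly.
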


\begin{proof}
The sufficiency is obvious. Assume $G$ is dense-ultraconnected, then it follows (3) of Theorem~\ref{t22} that, for any two points $x$ and $y$ in $X$, we have $x\in \overline{\{y\}}$ or $y\in \overline{\{x\}}$. Since $G$ is a quasitopological group, we have $y\in \overline{\{x\}}$ for any $x, y\in G$, that is, $\overline{\{x\}}=G$. Therefore, $G$ is a indiscrete space.
\end{proof}

By Theorem~\ref{t22}, the space $G$ in Remark~\ref{rem3} is a dense-ultraconnected paratopological group; it is obvious that $G$ is not a indiscrete space. By Theorem~\ref{t22} again, the following Example $H$ is ultraconnected which is not dense-ultraconnected.

\begin{example}
Let $H=\mathbb{R}$ endowed with a topology $\tau$ which is generated by the following neighborhood base as follows:
for each $x\in\mathbb{R}\setminus\{0, 1\}$, the neighborhood has the form $[x, +\infty)$, and for each $x\in\{0, 1\}$, the neighborhood has the form $\{x\}\cup [n, +\infty), n\in\mathbb{N}$. Clearly, $H$ is ultraconnected and not dense-ultraconnected since $0\not\in\overline{\{1\}}$ and $1\not\in\overline{\{0\}}$.
\end{example}

The following example shows that dense-ultraconnected is not finite productive.

\begin{example}
There exists a dense-ultraconnected paratopological group $G$ such that $G^{2}$ is not dense-ultraconnected.
\end{example}

\begin{proof}
Let $G$ be the paratopological group in Remark~\ref{rem3}. Clearly, $G$ is dense-ultraconnected by Theorem~\ref{t22}. However, $G^{2}$ is not dense-ultraconnected since
$\{(1, 0)\}\not\in\overline{\{(0, 1)\}}$ and $\{(0, 1)\}\not\in\overline{\{(1, 0)\}}$, hence $G^{2}$ is not dense-ultraconnected by Theorem~\ref{t22} again.
\end{proof}


  \end{document}